\numberwithin{equation}{section}
\theoremstyle{definition}
\newtheorem{dfn}{Definition}[section]
\theoremstyle{plain}
\newtheorem{thm}{Theorem}[section]
\newtheorem{pro}{Proposition}[section]
\newtheorem{lmm}{Lemma}[section]
\theoremstyle{remark}
\newtheorem{rem}{Remark}[section]
\newcommand{\N}{\mathbb{N}}
\newcommand{\E}{\mathbb{E}}
\renewcommand{\P}{\mathbb{P}}
\newcommand{\Var}{\mathbb{V}ar}
\newcommand{\Cov}{\mathrm{C}ov}
\newcommand{\R}{\mathbb{R}}
\newcommand{\e}{\mathrm{e}}
\newcommand{\lb}{\lbrace}
\newcommand{\rb}{\rbrace}
\renewcommand{\d}{\mathrm{d}}
\renewcommand{\i}{\mathrm{i}}
\begin{document}
\title[Multi-Mixed FBM and FOU]{Multi-Mixed Fractional Brownian Motions and Orstein--Uhlenbeck Processes}

\date{\today}

\author[Almani]{Hamidreza Maleki Almani}
\address{School of Technology and Innovations, University of Vaasa, P.O. Box 700, FIN-65101 Vaasa, FINLAND}
\email{hamidreza.maleki.almani@uwasa.fi}

\author[Sottinen]{Tommi Sottinen}
\address{School of Technology and Innovations, University of Vaasa, P.O. Box 700, FIN-65101 Vaasa, FINLAND}
\email{tommi.sottinen@uwasa.fi}


\begin{abstract}
We study the so-called multi-mixed fractional Brownian motions (mmfBm) and multi-mixed fractional Ornstein--Ulhenbeck (mmfOU) processes. These processes  are constructed by mixing by superimposing (infinitely many) independent fractional Brownian motions (fBm) and fractional Ornstein--Uhlenbeck processes (fOU), respectively. 
We prove their existence as $L^2$ processes and study their path properties, viz. long-range and short-range dependence, H\"older continuity, $p$-variation, and conditional full support. 
\end{abstract}

\keywords{%
fractional Brownian motion,
Gaussian processes,
long-range dependence,
multi-mixed fractional Brownian motion, 
multi-mixed fractional Ornstein--Uhlenbeck process,
short-range dependence,
stationary-increment processes,
stationary processes}

\subjclass[2020]{60G10, 60G15, 60G22}  

\maketitle


\section{Introduction and Preliminaries}\label{sect:introduction}

The fractional Brownian motion (fBm) $B^H$, with parameter $H\in(0,1)$ called the Hurst index, is the unique (up to a multiplicative constant) centered $H$-self-similar stationary-increment Gaussian process. The fBm was first studied in  \cite{Kolmogorov-1940}. The name fractional Brownian motion comes from the influential article \cite{Mandelbrot-Van-Ness-1968}. For further information of the fBm, see the monographs \cite{Biagini-Hu-Oksendal-Zhang-2008,Mishura-2008}. The covariance of the fBm with Hurst index $H$ is given by
$$
r_H(t,s) 
= \frac12\left[t^{2H} + s^{2H} - |t-s|^{2H}\right]. 
$$
For $H=1/2$ this process is well-known as the Brownian motion (Bm) or the Wiener process.
As a stationary-increment process, the fBm admits the spectral density
\begin{equation}\label{fbm-sd}
f_H(x)
=
\frac{\sin(\pi H)\Gamma(1+2H)}{2\pi}\, |x|^{1-2H},
\end{equation}
where $\Gamma$ is the complete gamma function
$$
\Gamma(\alpha) = \int_0^\infty t^{\alpha-1}\e^{-t}\, \d t, 
$$
see \cite{Samorodnitsky-Taqqu-1994}. 

Let
$$
\varrho_H(\delta;t)
=
\E\left[(B^H_{\delta}-B^H_0)(B^H_{t+\delta}-B^H_t)\right]
$$
be the incremental autocovariance (with lag $\delta$) of the fBm.
For $t\to\infty$ we have the power decay
$$
\varrho_H(\delta;t) 
\sim
H(2H-1)\delta^2 t^{2H-2}.
$$
This means that the increments of fBm, called the fractional Gaussian noise (fGn), are positively correlated and long-range dependent of $H>\frac12$. For $H<\frac12$ they are negatively correlated and short-range dependent.

In the Bm case $H=\frac12$ we have independent increments, i.e., no dependence:
$$
\varrho_{\frac12}(\delta;t) =0.
$$

A process $X$ is (locally) H\"older continuous with exponent $H$ if
$$
\sup_{t,s\in[0,T]}\frac{|X_t-X_s|}{|t-s|^H} < \infty
$$
The H\"older index of a process $X$ is
$$
\mathrm{Hol}_T(X) = \sup\left\{ H>0\,;\, \sup_{t,s\in[0,T]}\frac{|X_t-X_s|}{|t-s|^H} < \infty\right\}
$$

The fBm has almost surely locally H\"older continuous paths with any order $H-\varepsilon$ for any $\varepsilon>0$, but not with order $H$.  
This follows, e.g., from Theorem 1 of \cite{Azmoodeh-Sottinen-Viitasaari-Yazigi-2014}. 
Consequenlty, $\mathrm{Hol}(B^H) = H$.

In addition to H\"older continuity, we have the equidistant $p$-variation as a measure of the path regularity. For a process $X$ and $p\in [1,\infty)$ for the equidistant partitions
$
\pi_n := \lbrace t_k=\frac{k}{n}T: k=0, 1,\ldots , n\rbrace,
$ 
consider the limit in probability
$$
V^p_T(X) := \lim_{n\to\infty}\sum_{k=1}^n |X_{t_k}- X_{t_{k-1}}|^p. 
$$
If this limit is finite, it is called the equidistant $p$-variation on $[0,T]$ of $X$. The equidistant $p$-variation index of a process $X$ is
\begin{equation*}
\mathrm{var}_T(X) = \sup\left\{ p\,;\, V_T^p(X) < \infty\right\}.
\end{equation*}

For the fBm we have
\begin{equation*}
V^p_T(B^H)=\left\lb
\begin{array}{lll}
\infty & ; & pH<1\\
T\mu_p & ; & pH=1\\
0 & ; & pH>1
\end{array}
\right.
\end{equation*}
where $\mu_p$ is the $p$th moment of a standard Gaussian random variable, see \cite{Dudley-Norvaisa-1998, Dudley-Norvaisa-1999}. Consequently, $\mathrm{var}_T(B^H) = 1/H$.

While the fBm has been proposed as a model for financial time series, modeling with it makes arbitrage possible, see \cite{Bender-Sottinen-Valkeila-2007}. To eliminate this problem, a generalization called mixed fractional Brownian motion (mfBm) was introduced in \cite{Cheridito-2001}.  This is the mixture model
$$
M^{a,b} = aB + bB^H,
$$ 
where $a,b\in\R$ and $B$ is a standard Brownian motion (Bm) independent of the fBm $B^H$. If $H>1/2$, the mfBm has the path roughness governed by the Bm part and the long-range dependence governed by the fBm part.  Hence, e.g., in pricing of financial derivatives the corresponding mixed Black--Scholes model yields the same option prices as the standard Brownian model, see \cite{Bender-Sottinen-Valkeila-2008}.

A natural generalization of the mfBm is to consider two (or $n$) independent fBm mixtures, see \cite{Almani-hosseini-Tahmasebi-2021}. In this paper, we study an independent infinite-mixture generalization that we call the multi-mixed fractional Brownian motion (mmfBm) with parameters $\sigma_k$, $H_k$, $k\in\N$:
$$
M = \sum_{k=1}^\infty \sigma_k B^{H_k},
$$
where $B^{H_k}$'s are independent fBm's with Hurst indices $H_k\in(0,1)$, and $\sigma_k$'s are positive volatility constants satisfying $\sum_{k=1}^\infty \sigma_k^2 < \infty$.

\medskip

For other kinds of generalizations of the fBm, see e.g. \cite{Houdre-Villa-2003,Levy-Vehel-1995,Perrin-Harba-Berzin-Joseph-Iribarren-Bonami-2001,Perrin-Harba-Iribarren-Jennane-2005}.

\bigskip

The fractional Ornstein--Uhlenbeck process (fOU) $U^{\lambda,H}$, with parameters $\lambda>0$ and $H\in(0,1)$ is the stationary solution of the Langevin equation
$$
\d U^{\lambda,H}_t = -\lambda U^{\lambda,H}_t \d t + \d B^H_t,
$$
which is given by
$$
U^{\lambda,H}_t = \int_{-\infty}^t \e^{-\lambda(t-s)}\, \d B^H_s,
$$
where $(B^H_s)_{s\le 0}$ is an independent copy of the fBm $(B^H_s)_{s\ge 0}$, see \cite{Cheridito-Kawaguchi-Maejima-2003}.
Note that the Langevin equation and its solution can be understood via integration-by-parts.
As a stationary process, the fOU admits the spectral density 
\begin{equation}\label{fou-sd}
f_{\lambda,H}(x)
=
\frac{f_H(x)}{x^2+ \lambda^2},
\end{equation}
where $f_H$ is the spectral density of the driving fBm \eqref{fbm-sd}, see \cite{Barboza-Viens-2017}.
Denote, for $\alpha\in (-1,0)\cup (0,1)$
\begin{eqnarray}
{\gamma}_\alpha(x) &=& \frac{1}{\Gamma(\alpha)}\int_0^x s^{\alpha-1}\e^{s}\, \d s,\label{NLG} \\ 
{\Gamma}_\alpha(x) &=& \frac{1}{\Gamma(\alpha)}\int_x^\infty s^{\alpha-1}\e^{-s}\, \d s,\label{NUG} 
\end{eqnarray}	
and ${\gamma}_0(x)=1$, ${\Gamma}_0(x)=0$.
The autocovariance function of the fOU process can be written as
\begin{equation}\label{fou-cov}
\rho_{\lambda,H}(t)
=
\frac{\Gamma(1+2H)}{4}
\frac{\e^{-\lambda t}}{\lambda^{2H}}
\bigg\lb 1+{\gamma}_{2H-1}(\lambda t)+\e^{2\lambda t}{\Gamma}_{2H-1}(\lambda t)\bigg\rb,
\end{equation}
see Proposition  \ref{pro-4} below. For $H=\frac12$ we recover the well-known Bm case 
\begin{equation*}
\rho_{\lambda,\frac12}(t)=\frac{\e^{-\lambda t}}{2\lambda}.
\end{equation*}

For $t\to\infty$ we have the power decay
$$
\rho_{\lambda ,H}(t)=\frac12\sum_{n=1}^N\lambda^{-2n}\left(\prod_{j=0}^{2n-1}(2H-j)\right)t^{2H-2n} + O(t^{2H-2N-2}),
$$
for $N=1,2,\ldots$, i.e., the fOU process with $H>\frac12$ is long-range dependent, and for $H\le\frac12$ it is short-range dependent, see \cite{Cheridito-Kawaguchi-Maejima-2003}.

\medskip

The H\"older index and the $p$-variation $p$-variation index of fOU is the same as for the fBm: $\mathrm{Hol}_T(U^{\lambda,H}) = \mathrm{Hol}_T(B^H)=H$ and
$\mathrm{var}_T(U^{\lambda,H}) = \mathrm{var}_T(B^H) =1/H$.  These results follow e.g. from our Theorem \ref{thm 1.3} and Theorem \ref{thm:p-v}.

\medskip
In this paper we study the multi-mixed fractional Ornstein--Uhlenbeck process (mmfOU) with parameters $\lambda>0$ and $\sigma_k$, $H_k$, $k\in\N$, that is defined naturally as the stationary solution of Langevin equation with mmfBm as the driving noise:
$$
\d U_t = -\lambda U_t\,\d t + \d M_t,
$$ 
with
$$
U_0 = \int_{-\infty}^0 \e^{\lambda s}\, \d M_s,
$$
where $(M_s)_{s\le 0}$ is an independent copy of the mmfBm.

The rest of the paper is organized as follows.
In Section \ref{sect:definitions} we define the multi-mixed fractional Brownian motions (mmfBm) and the associated multi-mixed fractional Ornstein--Uhlenbeck (mmfOU) processes, prove their existence in $L^2(\Omega\times [0,T])$, and provide their basic properties.
The long-range dependence of these processes are what we studied in Section \ref{sect:LRD}
In Section \ref{sect:Continuity} we analyze the H\"older continuity and $p$-variation of mmfBm's and mmfOU processes.
The $p$-variation of these processes are calculated in Section \ref{sect:variation}
In Section \ref{sect:CFS} we show that the mmfBm's and mmfOU processes have the conditional full support property.
Finally, In Section \ref{sect:SP} some simulated path of these processes are given.

\section{Definitions and Basic Properties}\label{sect:definitions}

\begin{dfn}
Let $\sigma_k$, $k\in\N$, satisfy
\begin{equation}\label{ass1}
\sum_{k=1}^\infty \sigma_k^2 < \infty,
\end{equation} 	
and let $H_k$, $k\in\N$, satisfy
\begin{eqnarray} \nonumber
H_k\ne H_l \mbox{ for } k\ne l, \\ \label{ass2}
H_{\inf} = \inf_{k\in\N} H_k > 0 \\ \nonumber
H_{\sup} = \sup_{k\in\N} H_k <1.
\end{eqnarray}
The multi-mixed fractional Brownian motion (mmfBm) is 
$$
M = \sum_{k=1}^\infty \sigma_k B^{H_k},
$$
where $B^{H_k}$, $k\in\N$, are independent fBm's.
\end{dfn}	

The following proposition shows the existence of the mmfBm.

\begin{pro}
The mmfBm $M$ exist as a random function taking values in $L^2(\Omega\times[0,T])$ for all $T>0$.	
\end{pro}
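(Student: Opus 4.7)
The plan is to show that the partial sums $M^{(n)} = \sum_{k=1}^n \sigma_k B^{H_k}$ form a Cauchy sequence in the Hilbert space $L^2(\Omega\times [0,T])$ and invoke completeness. Each partial sum is a finite sum of jointly measurable processes (taking continuous versions of the $B^{H_k}$) and is square-integrable on $\Omega\times[0,T]$, so it lives in $L^2(\Omega\times [0,T])$.

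For the Cauchy step, the key observation is that independence of the $B^{H_k}$ makes the cross terms vanish. For $n>m$,
\begin{equation*}
\bigl\| M^{(n)} - M^{(m)} \bigr\|_{L^2(\Omega\times[0,T])}^2
= \int_0^T \E\!\left[\Bigl(\sum_{k=m+1}^n \sigma_k B^{H_k}_t\Bigr)^{\!2}\right]\d t
= \sum_{k=m+1}^n \sigma_k^2 \int_0^T t^{2H_k}\,\d t
= \sum_{k=m+1}^n \sigma_k^2\, \frac{T^{2H_k+1}}{2H_k+1}.
\end{equation*}
Then I would use the hypothesis \eqref{ass2} to bound the factor $T^{2H_k+1}/(2H_k+1)$ uniformly in $k$. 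Since $H_k\in[H_{\inf},H_{\sup}]\subset(0,1)$, we have $T^{2H_k+1}\le \max(T^{2H_{\inf}+1},T^{2H_{\sup}+1})$ and $1/(2H_k+1)\le 1/(2H_{\inf}+1)$, giving a constant $C_T<\infty$ with
\begin{equation*}
\bigl\|M^{(n)} - M^{(m)}\bigr\|_{L^2(\Omega\times[0,T])}^2 \le C_T \sum_{k=m+1}^n \sigma_k^2,
\end{equation*}
which tends to $0$ as $m,n\to\infty$ by \eqref{ass1}. Completeness of $L^2(\Omega\times[0,T])$ then produces the limit $M$, and restricting to a subsequence gives $M_t=\sum_{k=1}^\infty \sigma_k B^{H_k}_t$ almost surely for almost every $t$.

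There is no real obstacle here: the only subtle point is confirming that the bound on $T^{2H_k+1}/(2H_k+1)$ is genuinely uniform in $k$, which is exactly what the two-sided assumption $0<H_{\inf}\le H_{\sup}<1$ is designed to guarantee (in particular, the lower bound $H_{\inf}>0$ is needed precisely to control small $T$, while $H_{\sup}<1$ controls large $T$). After that, the construction is a standard Hilbert-space completeness argument, and the resulting $M$ inherits joint measurability and Gaussianity from the partial sums.
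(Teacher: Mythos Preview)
Your proof is correct and follows essentially the same Cauchy-sequence argument as the paper. The only cosmetic difference is in the uniform bound: the paper uses the cruder estimate $\dfrac{T^{1+2H_k}}{1+2H_k}\le \max\{1,T^3\}$, which relies only on $H_k\in(0,1)$ and does not actually require the two-sided hypothesis \eqref{ass2}---so your remark that $H_{\inf}>0$ and $H_{\sup}<1$ are ``needed'' here overstates their role for this particular proposition.
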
	

\begin{proof}
Let 
$
M^n = \sum_{k=1}^n \sigma_k B^{H_k}.
$	Clearly $M^n$ takes values in $L^2(\Omega\times[0,T])$.
Let $n,m\in\N$ with $n>m$. Then
\begin{eqnarray*}
{\| M^{n} - M^{m}\|}^2_{L^2(\Omega\times[0,T])}
&=&
\int_0^T \E\left[(M_t^n - M_t^m)^2\right]\, \d t\\
&=&
\int_0^T \E\left[\left(\sum_{k=m+1}^n \sigma_k B^{H_k}_t\right)^2\right] \, \d t \\
&=&
\sum_{k=m+1}^n \int_0^T \sigma_k^2 \E\left[(B_t^{H_k})^2\right] \, \d t \\
&=&
\sum_{k=m+1}^n \int_0^T \sigma_k^2 t^{2H_k} \, \d t  \\
&=&
\sum_{k=m+1}^n \sigma_k^2 \frac{T^{1+2H_k}}{1+2H_k} \\
&\le&
\sum_{k=m+1}^n \sigma_k^2\, \max\left\{1,T^{3}\right\},
\end{eqnarray*}	
which shows that the sequence $(M^n)_{n\in\N}$ is Cauchy. Thus $M^n\to M$ in
$L^2(\Omega\times[0,T])$ showing the existence.
\end{proof}	

In the same way we see that the mmfBm $(M_t)_{t\ge0}$ exist in the sense that $M^n_t \to M_t$ in $L^2(\Omega)$ for all $t\ge0$.

The following is now obvious:

\begin{pro}
The mmfBm has stationary increments, its covariance function is
\begin{equation}\label{mmfbm-cov}
r(t,s) = \sum_{k=1}^\infty \sigma_k^2 r_{H_k}(s,t) = \frac12\sum_{k=1}^\infty \sigma_k^2 \left[ |t|^{2H_k} + |s|^{2H_k} - |t-s|^{2H_k}\right],
\end{equation}
and it admits the spectral density
\begin{equation}\label{mmfbm-sd}
f(x) = \sum_{k=1}^\infty \sigma_k^2 f_{H_k}(x) = \sum_{k=1}^\infty 
\frac{\sin(\pi H_k)\Gamma(1+2H_k)}{2\pi}\sigma_k^2|x|^{1-2H_k}.
\end{equation}	
\end{pro}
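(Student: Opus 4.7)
The proposition collects three assertions (covariance, stationary increments, spectral density), and all three reduce to the already-established $L^2$-convergence $M^n \to M$ combined with the known facts for fractional Brownian motions and the additivity of second-moment quantities under independent superpositions. My plan is to handle the covariance first, then derive stationary increments as a corollary, and finally read off the spectral density from the spectral representation.

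For the covariance, the natural route is to compute $\E[M^n_t M^n_s]$ for the truncation $M^n=\sum_{k=1}^n \sigma_k B^{H_k}$. Independence of the $B^{H_k}$ kills all cross terms, leaving $\E[M^n_t M^n_s] = \sum_{k=1}^n \sigma_k^2 r_{H_k}(t,s)$. Since $M^n_t\to M_t$ in $L^2(\Omega)$ (and likewise at $s$), continuity of the inner product on $L^2(\Omega)$ gives $\E[M_tM_s]=\lim_n \sum_{k=1}^n \sigma_k^2 r_{H_k}(t,s)$. To see that this limit is the absolutely convergent series in \eqref{mmfbm-cov}, I would use the crude bound $|r_{H_k}(t,s)|\le \tfrac12(|t|^{2H_k}+|s|^{2H_k}+|t-s|^{2H_k})$ together with $a^{2H_k}\le \max(1,a^2)$ for $a\ge 0$; this produces a uniform-in-$k$ majorant depending only on $(t,s)$, and $\sum\sigma_k^2<\infty$ closes the argument.

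Stationary increments is then essentially a corollary. For fixed $h\in\R$, each shifted process $\widetilde B^{H_k}_t := B^{H_k}_{t+h}-B^{H_k}_h$ is again a fractional Brownian motion with Hurst parameter $H_k$, and these shifted processes remain jointly independent. The same $L^2$-Cauchy argument shows that $\widetilde M^n := \sum_{k=1}^n \sigma_k \widetilde B^{H_k}$ converges in $L^2(\Omega\times[0,T])$ to a process $\widetilde M$ whose covariance is identical to that of $M$ computed above. As both $M$ and $\widetilde M$ are centered Gaussian with identical covariance, their finite-dimensional distributions agree, giving the stationary-increment property.

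For the spectral density I would invoke the spectral representation of each fBm component: $r_{H_k}$ is the covariance of a centered stationary-increment Gaussian process with spectral density $f_{H_k}$ as in \eqref{fbm-sd}. By bilinearity of the spectral transform and independence, the spectral measure of an independent sum is the sum of the component spectral measures; applied to the partial sums $M^n$ this yields spectral density $\sum_{k=1}^n \sigma_k^2 f_{H_k}$. To pass to the limit one checks that for every $x\ne 0$ the series $\sum_k \sigma_k^2 f_{H_k}(x)$ converges: the factors $\sin(\pi H_k)\Gamma(1+2H_k)$ are uniformly bounded since $H_k$ lies in the compact interval $[H_{\inf},H_{\sup}]\subset(0,1)$ by \eqref{ass2}, while $|x|^{1-2H_k}\le |x|^{1-2H_{\inf}}\vee |x|^{1-2H_{\sup}}$ gives a uniform-in-$k$ bound on $f_{H_k}(x)$, so \eqref{ass1} yields convergence. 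Fubini/Tonelli then transfers the identity from the truncations to the full series, and identifies \eqref{mmfbm-sd} as the spectral density of $M$. The step that needs the most care is this last interchange of summation and integration; all other steps are routine consequences of the $L^2$-convergence already proved.
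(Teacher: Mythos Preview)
Your argument is correct. The paper offers no proof at all for this proposition---it is introduced with the sentence ``The following is now obvious:''---so your write-up is substantially more detailed than what the paper provides; in particular your care with the Fubini/Tonelli step for the spectral density and the explicit $L^2$-continuity argument for the covariance go beyond anything the authors spell out.
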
	

\begin{dfn}
The multi-mixed fractional Ornstein--Uhlenbeck process (mmfOU) $U$ with parameter $\lambda>0$, is the stationary solution of the Langevin equation
\begin{equation}
\d U_t = -\lambda U_t \d t + \d M_t,\label{Langevin}
\end{equation}
where the equation is understood in the integration-by-parts sense.
\end{dfn}	

\begin{pro}\label{pro-3}
On $L^2(\Omega\times [0,T])$, the mmfOU can be represented as the integral
$$
U_t = \e^{-\lambda t}\xi +\int_{0}^t \e^{-\lambda(t-s)}\, \d M_s,
$$	
where the integral is understood in the integration-by-parts sense, and
$$
\xi = \int_{-\infty}^0 \e^{\lambda s}\, \d M_s, 
$$ 
where $(M_s)_{s\le 0}$ is an independent copy of the mmfBm $(M_s)_{s\ge0}$.
\end{pro}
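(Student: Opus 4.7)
The plan is to define the right-hand side as a candidate process $V$, show it lies in $L^2(\Omega\times[0,T])$, verify it satisfies the Langevin equation \eqref{Langevin}, and check it is stationary; uniqueness of the stationary solution of \eqref{Langevin} then identifies $V$ with $U$.

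First I would show that $\xi$ exists in $L^2(\Omega)$. Writing $\xi=\sum_k \xi_k$ with $\xi_k = \sigma_k\int_{-\infty}^0 \e^{\lambda s}\,\d B^{H_k}_s$, each $\xi_k$ is a constant multiple of the initial value of the standard fOU driven by $B^{H_k}$, so it has variance $\sigma_k^2\rho_{\lambda,H_k}(0)$. Using the explicit formula \eqref{fou-cov} together with the assumption $H_k\in[H_{\inf},H_{\sup}]\subset(0,1)$, one obtains a uniform bound $\sup_k \rho_{\lambda,H_k}(0)<\infty$, so $\sum_k \E[\xi_k^2]\le C\sum_k\sigma_k^2<\infty$; combined with the independence of the $\xi_k$, this gives convergence of the series in $L^2(\Omega)$.

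Next I would interpret $\int_0^t \e^{-\lambda(t-s)}\,\d M_s$ via integration by parts as $M_t - \lambda\int_0^t \e^{-\lambda(t-s)} M_s\,\d s$, which is well defined in $L^2(\Omega)$ since $M\in L^2(\Omega\times[0,T])$. Setting
\[
V_t := \e^{-\lambda t}\xi + M_t - \lambda\int_0^t \e^{-\lambda(t-s)} M_s\,\d s,
\]
the Langevin identity $V_t-V_0 = -\lambda\int_0^t V_u\,\d u + (M_t-M_0)$ follows by a Fubini interchange in the deterministic Lebesgue integrals.

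Stationarity is handled termwise. Defining $U^{(k)}_t := \e^{-\lambda t}\xi_k + \sigma_k\int_0^t \e^{-\lambda(t-s)}\,\d B^{H_k}_s$, each $U^{(k)}$ is (a scalar multiple of) the standard fOU with parameters $(\lambda,H_k)$, hence stationary Gaussian with covariance $\sigma_k^2\rho_{\lambda,H_k}$. Summing in $L^2(\Omega)$, justified again by $\sup_k\rho_{\lambda,H_k}(0)<\infty$ and $\sum_k\sigma_k^2<\infty$, one identifies $V_t=\sum_k U^{(k)}_t$ and reads off the covariance $\sum_k\sigma_k^2\rho_{\lambda,H_k}(|t-s|)$, which depends only on $|t-s|$. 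The main technical obstacle is extracting from \eqref{fou-cov} the uniform bound $\sup_k\rho_{\lambda,H_k}(0)<\infty$ for $H_k\in[H_{\inf},H_{\sup}]$, which requires care near the transition value $H_k=1/2$ where the index $2H_k-1$ in ${\gamma}_{2H_k-1}$ and ${\Gamma}_{2H_k-1}$ changes sign; once this uniformity is in place, the termwise interchanges and the identification of $V$ with $U$ are routine.
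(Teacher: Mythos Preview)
Your approach is correct but genuinely different from the paper's. The paper argues by truncation: it sets $M^n=\sum_{k=1}^n\sigma_kB^{H_k}$, writes down the known stationary Langevin solution $U^n$ driven by $M^n$, and then passes to the limit by using integration by parts to show $\int_0^t \e^{\lambda s}\,\d M^n_s\to\int_0^t \e^{\lambda s}\,\d M_s$ and $\xi_n\to\xi$ in $L^2$. You instead build the candidate $V$ directly, verify the Langevin identity by Fubini, and check stationarity termwise; uniqueness then identifies $V$ with $U$. The paper's route is shorter because it borrows the single-fBm result wholesale and only needs $L^2$ continuity of the integration-by-parts expression, whereas your route is more self-contained and makes explicit the two properties (Langevin, stationarity) that the paper leaves to the limit. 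Either is fine.

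One comment on the obstacle you flag: the uniform bound $\sup_k\rho_{\lambda,H_k}(0)<\infty$ is easier than you suggest. Rather than extracting it from \eqref{fou-cov} (where the behavior of $\gamma_{2H-1}$ and $\Gamma_{2H-1}$ at the origin is indeed awkward for $H<1/2$), you can compute $\mathrm{Var}(\xi_k/\sigma_k)$ directly via integration by parts and the covariance of $B^{H_k}$, obtaining $\rho_{\lambda,H}(0)=H\Gamma(2H)\lambda^{-2H}$, which is a continuous function of $H\in(0,1)$ and hence bounded on $[H_{\inf},H_{\sup}]$; no special treatment at $H=1/2$ is needed. The paper in fact carries out exactly this computation later (see \eqref{Var(U0)}).
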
		

\begin{proof}
Let $M^n = \sum_{k=1}^n \sigma_k B^{H_k}$. Then, the stationary solution of the Langevin equation
$$
\d U^n_t = -\lambda U^n_t \d t + \d M^n_t,
$$
is given by 
$$
U^n_t = \e^{-\lambda t}\xi_n +\int_{0}^t \e^{-\lambda(t-s)}\, \d M^n_s,
$$
where
$$
\xi_n = \int_{-\infty}^0 \e^{\lambda s}\, \d M^n_s. 
$$
Then, with integration-by-parts
\begin{equation*}
\int_{0}^t \e^{\lambda s}\, \d M^n_s = \e^{\lambda t}M^n_t - \lambda\int_{0}^t \e^{\lambda s}M^n_s\, \d s
\ \to\ \e^{\lambda t}M_t - \lambda\int_{0}^t \e^{\lambda s}M_s\, \d s=\int_{0}^t \e^{\lambda s}\, \d M_s,
\end{equation*}
because $M^n\to M$ in $L^2(\Omega\times[0,T])$. With the same arguments $\xi_n \to \xi$ in $L^2(\Omega)$. This yields $U^n \to U$ in $L^2(\Omega\times[0,T])$.
\end{proof}	

The following technical lemma is used to calculate spectral densities.

\begin{lmm}\label{lmm-1}
For $0\ne p\in(-1,1),\,\lambda >0,\, t>0$
\begin{equation}\label{int_cont}
\int_{-\infty}^\infty\e^{\i tx}\frac{|x|^p}{\lambda^2+x^2}\, \d x =
\frac{\pi\e^{-\lambda t}}{2\cos(\frac{p\pi}{2})\lambda^{1-p}}
\left\lb 1 + \gamma_{-p}(\lambda t) + \e^{2\lambda t}\Gamma_{-p}(\lambda t)\right\rb,
\end{equation}
where $\gamma_{-p}$ and $\Gamma_{-p}$ are given by \eqref{NLG} and \eqref{NUG}.
\end{lmm}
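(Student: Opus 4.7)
The plan is to reduce the computation to a convolution of a Riesz potential with the Cauchy (double–exponential) kernel and then identify the three pieces of that convolution with the incomplete Gamma integrals that define $\gamma_{-p}$ and $\Gamma_{-p}$.

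First, by evenness of $|x|^p/(\lambda^2+x^2)$ in $x$ and symmetry in $t$, it suffices to prove the identity for $t>0$. The key analytic input is the elementary Fourier pair
$$
\frac{1}{\lambda^2+x^2}=\frac{1}{2\lambda}\int_{-\infty}^\infty \e^{-\lambda|s|}\e^{-\i sx}\, \d s,
$$
which comes from $\int_\R \e^{-\lambda|t|}\e^{-\i tx}\,\d t = 2\lambda/(\lambda^2+x^2)$. Inserting this into the left-hand side of \eqref{int_cont} and (formally) interchanging the order of integration gives
$$
\int_{-\infty}^\infty \e^{\i tx}\frac{|x|^p}{\lambda^2+x^2}\,\d x
=
\frac{1}{2\lambda}\int_{-\infty}^\infty \e^{-\lambda|s|}\,\Phi(t-s)\,\d s,
\qquad
\Phi(u):=\int_{-\infty}^\infty \e^{\i ux}|x|^p\,\d x.
$$

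Second, $\Phi$ is the classical distributional Fourier transform of $|x|^p$. Starting from $\int_0^\infty x^p \e^{-\i ux}\,\d x=\Gamma(p+1)(\i u)^{-p-1}$ (valid by analytic continuation from $\mathrm{Re}\,\i u>0$) and combining with its complex conjugate, one obtains
$$
\Phi(u)=-2\Gamma(p+1)\sin(\tfrac{p\pi}{2})|u|^{-p-1}
=\frac{\pi}{\Gamma(-p)\cos(\tfrac{p\pi}{2})}\,|u|^{-p-1},
$$
where the last equality uses the reflection formula $\Gamma(1+p)\Gamma(-p)=-\pi/\sin(\pi p)$ and the duplication $\sin(\pi p)=2\sin(\tfrac{p\pi}{2})\cos(\tfrac{p\pi}{2})$.

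Third, the convolution $\int_\R \e^{-\lambda|s|}|t-s|^{-p-1}\,\d s$ is computed by splitting the real line into the three intervals $(-\infty,0)$, $(0,t)$, $(t,\infty)$. In each piece the substitution $v=\lambda(t-s)$ or $v=\lambda(s-t)$ turns the integral into a scaled incomplete Gamma integral; matching with \eqref{NLG} and \eqref{NUG} gives, respectively,
$$
\lambda^{p}\,\Gamma(-p)\,\e^{\lambda t}\Gamma_{-p}(\lambda t),\qquad
\lambda^{p}\,\Gamma(-p)\,\e^{-\lambda t}\gamma_{-p}(\lambda t),\qquad
\lambda^{p}\,\Gamma(-p)\,\e^{-\lambda t}.
$$
Summing and multiplying by $\frac{1}{2\lambda}\cdot \frac{\pi}{\Gamma(-p)\cos(p\pi/2)}$ cancels $\Gamma(-p)$, collects the factor $\lambda^{p-1}$, and produces exactly the bracketed expression on the right-hand side of \eqref{int_cont}.

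The main obstacle is rigorously justifying the interchange of integrals, since $\Phi$ is only a tempered distribution when $p\in(0,1)$ and $\int_\R e^{-\lambda|s|}|t-s|^{-p-1}\,\d s$ diverges at $s=t$ when $p\in(0,1)$. I would handle this by first proving the identity for $p\in(-1,0)$, where Fubini applies to the absolutely convergent double integral and the convolution is a proper Lebesgue integral, and then extending to $p\in(0,1)$ by analytic continuation in $p$: both sides of \eqref{int_cont} are holomorphic in $p$ on a common strip containing $(-1,1)\setminus\{0\}$ (after interpreting the singular convolution via regularization $|t-s|^{-p-1}\to|t-s|^{-p-1}\e^{-\epsilon|s|^2}$), and they agree on a nonempty open set.
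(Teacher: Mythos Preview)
Your proposal is correct and is essentially the same argument as the paper's: both rewrite the integrand as a product of two known Fourier transforms (of $\e^{-\lambda|\cdot|}$ and of a power $|\cdot|^\alpha$), apply the convolution theorem, and then split the resulting real convolution into the three intervals $(-\infty,0)$, $(0,t)$, $(t,\infty)$ to obtain $\Gamma(-p)$, $\gamma_{-p}(\lambda t)$, and $\Gamma_{-p}(\lambda t)$. The only cosmetic difference is that the paper parametrizes via $\alpha=-p-1$ and handles the singular range $p\in(0,1)$ by invoking the meromorphic extension of $|t|^\alpha$ as a tempered distribution, whereas you first establish the identity for $p\in(-1,0)$ and then analytically continue in $p$; both devices are standard and serve the same purpose.
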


\begin{proof}
Recall that for the Fourier transform
\begin{equation*}
\mathscr{F}(f)(x)=\frac{1}{\sqrt{2\pi}}\int_{-\infty}^\infty \e^{-\i tx}f(t)\, \d t,
\end{equation*}
we have the convolution theorem
\begin{equation}
\int_{-\infty}^\infty \e^{\i tx}\mathscr{F}(f)(x)\mathscr{F}(g)(x)\, \d x=\int_{-\infty}^\infty f(t-\xi)g(\xi)\, \d\xi. \label{convolution}
\end{equation}
Moreover, we have
\begin{align}
&\mathscr{F}\left(e^{-\lambda|t|}\right)=\sqrt{\frac{2}{\pi}}\cdot\frac{\lambda}{\lambda^2+x^2},\label{F_1}\\
&\mathscr{F}\left(|t|^{\alpha}\right)=\sqrt{\frac{2}{\pi}}\cdot\Gamma(\alpha+1)\cos\left(\frac{(\alpha+1)\pi}{2}\right)|x|^{-(\alpha+1)}.\label{F_2}
\end{align}
The first formula \eqref{F_1} is valid for $\lambda>0$. The second formula \eqref{F_2} is valid for $-1<\alpha<0$. For $-2<\alpha<-1$, because of the function $|t|^{\alpha}$, some singular terms arise at the origin.  Nevertheless, it admits a unique meromorphic extension as a tempered distribution, also denoted $|t|^{\alpha}$ as a homogeneous distribution on all real line $\R$ including the origin (see \cite{gelfand1964generalized}). So, we use that extension and formula \eqref{F_2} will be valid for all $-1\ne\alpha\in(-2,0)$. So, using $f(t)=e^{-\lambda|t|}$ and $g(t)=|t|^\alpha$ in \eqref{convolution} we obtain
\begin{eqnarray*}
\lefteqn{\frac{2}{\pi}\cdot\Gamma(\alpha+1)\cos\left(\frac{(\alpha+1)\pi}{2}\right)\lambda
\int_{-\infty}^\infty\e^{itx}\frac{|x|^{-(\alpha+1)}}{\lambda^2+x^2}\, \d x} \\ &=&\int_{-\infty}^\infty |\xi|^\alpha\e^{-\lambda |t-\xi|}\, \d\xi\\
&=&\int_{-\infty}^0 (-\xi)^\alpha\e^{-\lambda (t-\xi)}\, \d\xi
+ \int_0^t\xi^\alpha\e^{-\lambda (t-\xi)}\, \d\xi
+ \int_t^\infty\xi^\alpha\e^{-\lambda (\xi-t)}\, \d\xi\\
&=&\frac{\e^{-\lambda t}}{\lambda^{(\alpha+1)}}
\int_0^{\infty} u^\alpha\e^{-u}\, \d u
+ \frac{\e^{-\lambda t}}{\lambda^{(\alpha+1)}}
\int_0^{\lambda t} u^\alpha\e^{u}\, \d u 
+ \frac{\e^{\lambda t}}{\lambda^{(\alpha+1)}}
\int_{\lambda t}^\infty u^\alpha\e^{-u}\, \d u\\
&=&\frac{\e^{-\lambda t}\Gamma(\alpha+1)}{\lambda^{(\alpha+1)}}
\left\lb 1 + \gamma_{(\alpha+1)}(\lambda t) + \e^{2\lambda t}\Gamma_{(\alpha+1)}(\lambda t)\right\rb.
\end{eqnarray*}
Now, choosing $p=-(\alpha+1)$ proves \eqref{int_cont}.
\end{proof}
It follows from Lemma \ref{lmm-1} that:
\begin{pro}\label{pro-4}
The covariance function of the fOU is
\begin{equation} \label{fou-cov}
\rho_{\lambda,H}(t) = \E[U^{\lambda,H}_{s}U^{\lambda,H}_{s+t}] = 
\frac{\Gamma(1+2H)}{4} \frac{\e^{-\lambda t}}{\lambda^{2H}}
\bigg\lb 1+{\gamma}_{2H-1}(\lambda t)+\e^{2\lambda t}{\Gamma}_{2H-1}(\lambda t)\bigg\rb.
\end{equation}
\end{pro}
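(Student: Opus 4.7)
The plan is to compute $\rho_{\lambda,H}$ as the inverse Fourier transform of the spectral density $f_{\lambda,H}$ from \eqref{fou-sd}, and then to read off the integral from Lemma \ref{lmm-1}. Since the fOU is a stationary $L^2$ process, we have
\begin{equation*}
\rho_{\lambda,H}(t) \;=\; \int_{-\infty}^\infty \e^{\i tx}\, f_{\lambda,H}(x)\, \d x
\;=\; \frac{\sin(\pi H)\Gamma(1+2H)}{2\pi}\int_{-\infty}^\infty \e^{\i tx}\,\frac{|x|^{1-2H}}{\lambda^2+x^2}\,\d x,
\end{equation*}
after substituting the expression \eqref{fbm-sd} for $f_H$ into $f_{\lambda,H}=f_H/(x^2+\lambda^2)$.

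Next I would set $p=1-2H$. For $H\in(0,1)\setminus\{1/2\}$ this gives $p\in(-1,1)\setminus\{0\}$, so Lemma \ref{lmm-1} applies and yields
\begin{equation*}
\int_{-\infty}^\infty \e^{\i tx}\,\frac{|x|^{1-2H}}{\lambda^2+x^2}\,\d x
\;=\;
\frac{\pi\e^{-\lambda t}}{2\cos\!\left(\tfrac{(1-2H)\pi}{2}\right)\lambda^{2H}}
\bigl\lb 1+\gamma_{2H-1}(\lambda t)+\e^{2\lambda t}\Gamma_{2H-1}(\lambda t)\bigr\rb,
\end{equation*}
because $-p=2H-1$ and $1-p=2H$. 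Using the trigonometric identity $\cos\!\left(\tfrac{\pi}{2}-\pi H\right)=\sin(\pi H)$, the factor $\sin(\pi H)$ in front of the integral cancels the $\cos$ in the denominator, leaving exactly
\begin{equation*}
\rho_{\lambda,H}(t)
\;=\;
\frac{\Gamma(1+2H)}{4}\,\frac{\e^{-\lambda t}}{\lambda^{2H}}
\bigl\lb 1+\gamma_{2H-1}(\lambda t)+\e^{2\lambda t}\Gamma_{2H-1}(\lambda t)\bigr\rb,
\end{equation*}
which is \eqref{fou-cov}.

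The only delicate point is the boundary case $H=\tfrac12$, at which both $\sin(\pi H)\Gamma(1+2H)$ and the factor $1/\cos((1-2H)\pi/2)$ are finite, but the normalisations $\gamma_0,\Gamma_0$ were defined by the exceptional convention $\gamma_0\equiv 1,\Gamma_0\equiv 0$. I would handle this either by observing that both sides of \eqref{fou-cov} are continuous in $H$ on $(0,1)$ and taking the limit $H\to\tfrac12$, or by a direct computation using the spectral density $f_{\lambda,1/2}(x)=\frac{1}{2\pi(x^2+\lambda^2)}$, which via a standard residue argument recovers $\rho_{\lambda,1/2}(t)=\e^{-\lambda t}/(2\lambda)$; this matches the stated formula since $\gamma_0(\lambda t)=1$, $\Gamma_0(\lambda t)=0$, and $\Gamma(2)/4=1/2$. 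The main (minor) obstacle is therefore just the bookkeeping of the trigonometric factor and verifying the $H=1/2$ degeneracy; the substantive analytic work has already been carried out in Lemma \ref{lmm-1}.
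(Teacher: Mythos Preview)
Your proof is correct and follows exactly the route the paper intends: the paper's own ``proof'' is simply the sentence ``It follows from Lemma \ref{lmm-1} that:'', and you have carefully spelled out that deduction by plugging $p=1-2H$ into \eqref{int_cont}, using $\cos\!\bigl(\tfrac{(1-2H)\pi}{2}\bigr)=\sin(\pi H)$, and separately checking the $H=\tfrac12$ case via the conventions $\gamma_0\equiv1$, $\Gamma_0\equiv0$.
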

\begin{pro}\label{pro-5}
The covariance function of the mmfOU is
\begin{equation} \label{mmfou-cov}
\rho_\lambda(t)= \E[U_{s}U_{s+t}] = 
\sum_{k=1}^\infty\sigma_k^2\frac{\Gamma(1+2H_k)\e^{-\lambda t}}{4\lambda^{2H_k}}
\bigg\lb 1+\gamma_{2H_k-1}(\lambda t)+\e^{2\lambda t}\Gamma_{2H_k-1}(\lambda t)\bigg\rb,
\end{equation}
and it admits the spectral density
\begin{equation}\label{mmfou-sd}
f_\lambda(x) = \sum_{k=1}^\infty\sigma_k^2\frac{\sin(\pi H_k)\Gamma(1+2H_k)}{2\pi}
\frac{|x|^{1-2H_k}}{x^2 + \lambda^2}.
\end{equation}	
\end{pro}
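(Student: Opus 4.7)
The plan is to reduce everything to the fOU case handled in Proposition \ref{pro-4}, exploiting the fact that $U$ inherits the natural decomposition of $M$ into independent summands. By Proposition \ref{pro-3}, $U$ is obtained from $M$ via linear operations (integration by parts and convolution against $\e^{-\lambda(t-s)}$), so I first write, for each $n$,
$$
U^{n}_t = \sum_{k=1}^n \sigma_k U^{\lambda,H_k}_t,
$$
where $U^{\lambda,H_k}$ denotes the fOU process driven by $B^{H_k}$ (and its independent past extension). Applying Proposition \ref{pro-3} to each truncated mmfBm $M^n = \sum_{k=1}^n \sigma_k B^{H_k}$ yields $U^n \to U$ in $L^2(\Omega\times[0,T])$ and, by the same argument, $U^n_t\to U_t$ in $L^2(\Omega)$ pointwise in $t$.

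Since the $B^{H_k}$'s are independent (together with their past extensions), so are the processes $U^{\lambda,H_k}$. Taking $L^2(\Omega)$-limits and using independence to kill the cross terms therefore gives
$$
\rho_\lambda(t) = \E[U_s U_{s+t}] = \lim_{n\to\infty}\sum_{k=1}^n \sigma_k^2\, \E\bigl[U^{\lambda,H_k}_s U^{\lambda,H_k}_{s+t}\bigr] = \sum_{k=1}^\infty \sigma_k^2\, \rho_{\lambda,H_k}(t).
$$
Plugging \eqref{fou-cov} from Proposition \ref{pro-4} into this identity produces \eqref{mmfou-cov}.

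For the spectral density, I verify directly that $f_\lambda$ in \eqref{mmfou-sd} satisfies $\rho_\lambda(t) = \int_{-\infty}^\infty \e^{\i tx}f_\lambda(x)\, \d x$. Applying Lemma \ref{lmm-1} with $p=1-2H_k$, so that $\cos(p\pi/2)=\sin(\pi H_k)$, $\lambda^{1-p}=\lambda^{2H_k}$, and $-p=2H_k-1$, yields
$$
\int_{-\infty}^\infty \e^{\i tx}\,\frac{\sin(\pi H_k)\Gamma(1+2H_k)}{2\pi}\cdot\frac{|x|^{1-2H_k}}{x^2+\lambda^2}\, \d x = \rho_{\lambda,H_k}(t),
$$
i.e.\ the $k$th summand on the right of \eqref{mmfou-sd} Fourier-inverts to the $k$th summand on the right of \eqref{mmfou-cov}. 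The only bookkeeping point is the Fubini exchange of $\sum_k$ with $\int_\R\, \d x$; the hypotheses \eqref{ass1}--\eqref{ass2} ensure $\sum_k \sigma_k^2\|f_{\lambda,H_k}\|_{L^1(\R)}<\infty$, because $H_{\inf}>0$ controls the singularity of $|x|^{1-2H_k}$ at $0$ while $H_{\sup}<1$ and the factor $(x^2+\lambda^2)^{-1}$ jointly control the tails. This uniform integrability across $k$ is the one place where the full strength of the assumptions is used, and with it the interchange of sum and integral is legitimate and \eqref{mmfou-sd} follows.
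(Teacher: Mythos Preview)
Your argument is correct and follows essentially the same route as the paper: decompose $U^n=\sum_{k=1}^n\sigma_kU^{\lambda,H_k}$ via Proposition~\ref{pro-3}, use independence and the $L^2$-convergence $U^n\to U$ to pass to the limit, and then invoke Proposition~\ref{pro-4} (equivalently Lemma~\ref{lmm-1}) termwise. The paper's own proof is terser---it takes the convergence $f_{\lambda,n}\to f_\lambda$ for granted from $U^n\to U$---whereas you spell out the Fubini step for \eqref{mmfou-sd} explicitly; this is a welcome clarification but not a different method.
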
	

\begin{proof}
Let $U^n$ be like in the proof of Proposition \ref{pro-3}, then
$$
f_{\lambda, n}(x) = 
\sum_{k=1}^n 
\sigma_k^2\frac{\sin(\pi H_k)\Gamma(1+2H_k)}{2\pi}
\frac{|x|^{1-2H_k}}{x^2 + \lambda^2},
$$
and $f_{\lambda, n}(x)\to f_\lambda(x)$ because $U^n \to U$ in $L^2(\Omega\times[0,T])$. This proves \eqref{mmfou-sd}. Similarly, \eqref{mmfou-cov} follows by Proposition \ref{pro-4}.
\end{proof}

\begin{rem}
Proposition \ref{pro-4} represents the covariance function $\rho_{\lambda,H}(t)$ in a form involving special functions. However, these special complex functions are usually not suitable for numerical computations. For example,in \cite{Barboza-Viens-2017}, Lemma B.1, the following representation was used for $H>\frac12$
\begin{eqnarray*}
\rho_{\lambda,H}(t) &=& H\Gamma(2H)\frac{e^{-\lambda t}}{\lambda^{2H}}\left\lbrace\frac{1+e^{2\lambda t}}{2} -
\frac{\lambda}{\Gamma(2H-1)}I_{\lambda,H}(t)\right\rbrace,\\
I_{\lambda,H}(t) &=& \int_0^t\int_0^{\lambda v}e^{2\lambda v}e^{-s}s^{2H-2}\,\d s\d v.
\end{eqnarray*}
the double integral above seems reasonable enough, but yields slow numerical calculation in practice. This can be remedied by calculating the inner integral as follows:
\begin{eqnarray*}
I_{\lambda,H}(t) &=& \int_0^{\lambda t}\int_{s/{\lambda}}^{t}e^{2\lambda v}e^{-s}s^{2H-2}\, \d s \d v\\
&=&\frac{1}{2\lambda}\int_0^{\lambda t}s^{2H-2}(e^{2\lambda t-s}-e^s)\,\d s\\
&=&\frac{e^{\lambda t}}{\lambda}\int_0^{\lambda t}s^{2H-2}\sinh(\lambda t-s)\, \d s.
\end{eqnarray*}
Consequently, 
\begin{equation*}\label{rhoH-empiric>}
\rho_{\lambda,H}(t) = \frac{\Gamma(2H+1)}{2\lambda^{2H}}\bigg\{\cosh(\lambda t)
 -
\frac{1}{\Gamma(2H-1)}\int_0^{\lambda t}s^{2H-2}\sinh(\lambda t-s)\, ds\bigg\},
\end{equation*}
\end{rem}
For the case $H<1/2$ we use the following developed version of Lemma 5.1 in \cite{hu2010parameter} for $\alpha>-1$. The proof is similar.
\begin{lmm}\label{2integ}
For $\alpha>-1$
\begin{equation*}
\int_0^\infty\int_0^\infty\e^{-(x+y)}|x-y|^\alpha\, \d x\d y = \Gamma(\alpha + 1).
\end{equation*}
\end{lmm}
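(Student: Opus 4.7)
The plan is to prove the identity by the linear change of variables $u=x+y$, $v=x-y$, which turns the double integral into a product of two elementary one-dimensional integrals. The Jacobian of this transformation satisfies $\d x\,\d y=\tfrac12\,\d u\,\d v$, and the quadrant $\{x,y>0\}$ is carried onto the cone $\{u>0,\; -u<v<u\}$. Since $|x-y|^{\alpha}=|v|^{\alpha}$ and $\e^{-(x+y)}=\e^{-u}$, the integrand separates.

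Carrying this out, the integral becomes
\begin{equation*}
\int_0^\infty \e^{-u}\left(\frac{1}{2}\int_{-u}^u |v|^{\alpha}\,\d v\right)\d u.
\end{equation*}
The inner integral is where the hypothesis $\alpha>-1$ is actually used, since $|v|^{\alpha}$ is integrable at the origin precisely in that range; by evenness it equals $2\int_0^u v^{\alpha}\,\d v=\tfrac{2u^{\alpha+1}}{\alpha+1}$. Substituting back leaves the one-dimensional integral $\frac{1}{\alpha+1}\int_0^\infty u^{\alpha+1}\e^{-u}\,\d u=\frac{\Gamma(\alpha+2)}{\alpha+1}$, which equals $\Gamma(\alpha+1)$ by the functional equation $\Gamma(\alpha+2)=(\alpha+1)\Gamma(\alpha+1)$.

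There is no real obstacle here; the only mild subtlety is to make sure the change-of-variables step is applied with the right orientation, so that the Jacobian factor $\tfrac12$ appears and the singularity of $|v|^{\alpha}$ at $v=0$ is handled by the restriction $\alpha>-1$. If one prefers to avoid the change of variables, an equivalent route is to exploit the symmetry in $(x,y)$ to write the integral as $2\int_0^\infty\!\int_0^x \e^{-(x+y)}(x-y)^{\alpha}\,\d y\,\d x$, substitute $w=x-y$ in the inner integral to obtain $\int_0^x w^{\alpha}\e^{-(2x-w)}\,\d w$, and then swap the order of integration; this again produces $\Gamma(\alpha+1)$ after elementary simplification.
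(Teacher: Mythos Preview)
Your argument is correct: the substitution $(u,v)=(x+y,x-y)$ with Jacobian $\tfrac12$ maps the quadrant onto the cone $\{u>0,\ |v|<u\}$, the inner integral $\int_{-u}^{u}|v|^{\alpha}\,\d v=\tfrac{2u^{\alpha+1}}{\alpha+1}$ is finite precisely for $\alpha>-1$, and the remaining gamma integral and the functional equation finish the computation. The alternative route you sketch (symmetrize, substitute $w=x-y$, swap order) is equally valid.

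The paper does not actually supply a proof of this lemma; it states that the argument is a variant of Lemma~5.1 in \cite{hu2010parameter} and writes only ``The proof is similar.'' So there is nothing to compare at the level of technique. Your self-contained computation is a welcome addition and makes the $\alpha>-1$ hypothesis transparent, whereas the cited lemma in \cite{hu2010parameter} is formulated for a narrower range of exponents arising from $H\ge \tfrac12$.
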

\begin{thm}\label{thm:tho-empiric}
For the fOU process $U^{\lambda ,H}$. We have
\begin{eqnarray}\label{rhoH-empiric<}
\rho_{\lambda,H}(t) = \frac{\Gamma(2H+1)}{2\lambda^{2H}}\,\bigg\lbrace\cosh(\lambda t)
-
\frac{1}{\Gamma(2H)}\int_0^{\lambda t}s^{2H-1}\cosh(\lambda t-s)\, ds\bigg\rbrace,\notag
\end{eqnarray}
and so for mmfOU process we have
\begin{eqnarray*}\label{rho-empiric}
\rho_{\lambda}(t) = \sum_{k=0}^\infty\sigma_k^2\frac{\Gamma(2H_k+1)}{2\lambda^{2H_k}}\,\bigg\{\cosh(\lambda t) 
-
\frac{1}{\Gamma(2H_k)}\int_0^{\lambda t}s^{2H_k-1}\cosh(\lambda t-s)\, ds\bigg\}.
\end{eqnarray*}
\end{thm}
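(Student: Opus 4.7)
The plan is to derive the $\cosh$-form of $\rho_{\lambda,H}$ and then sum to obtain the mmfOU formula. I expect to split the argument by Hurst regime, because the two cases require different handling of the singularity at the origin.

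For $H > 1/2$, I would start from the expression derived in the Remark immediately above,
$$\rho_{\lambda,H}(t) = \frac{\Gamma(2H+1)}{2\lambda^{2H}}\left\{\cosh(\lambda t) - \frac{1}{\Gamma(2H-1)}\int_0^{\lambda t}s^{2H-2}\sinh(\lambda t - s)\,ds\right\},$$
and integrate the inner integral by parts with $u = \sinh(\lambda t - s)$, $dv = s^{2H-2}\,ds$. Because $H > 1/2$ makes $s^{2H-1}\to 0$ at the origin and $\sinh(0) = 0$ at the upper limit, both boundary terms vanish and the integral reduces to $(2H-1)^{-1}\int_0^{\lambda t}s^{2H-1}\cosh(\lambda t - s)\,ds$. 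The functional equation $(2H-1)\Gamma(2H-1) = \Gamma(2H)$ then yields the stated formula.

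For $H \le 1/2$ this integration by parts fails because $s^{2H-1}$ is unbounded at zero, and the intermediate $\sinh$-form of the Remark is itself no longer well-defined. Here I would bypass Proposition \ref{pro-4} and compute $\rho_{\lambda,H}(t) = \E[U^{\lambda,H}_0 U^{\lambda,H}_t]$ from the stochastic integral representation of Proposition \ref{pro-3}. Applying integration by parts in each factor converts the stochastic integrals into deterministic Riemann integrals against the fBm sample path, and substituting the fBm covariance $\E[B^H_u B^H_v] = \frac{1}{2}(|u|^{2H}+|v|^{2H}-|u-v|^{2H})$ turns $\rho_{\lambda,H}(t)$ into a deterministic double integral. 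After changes of variable $x = -\lambda u$, $y = \lambda v$ that normalize the exponential weights, the troublesome $|u-v|^{2H}$ term, processed by a further integration by parts against the exponentials, produces an integral of the form $\int_0^\infty\int_0^\infty e^{-(x+y)}|x-y|^{2H-1}\,dx\,dy$. Since $2H - 1 > -1$ for every $H > 0$, Lemma \ref{2integ} evaluates this in closed form, and reassembling the pieces produces the $\cosh$ expression of the theorem.

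The mmfOU formula is then immediate from Proposition \ref{pro-5}: by mutual independence of the component fOUs, $\rho_\lambda(t) = \sum_{k=1}^\infty \sigma_k^2 \rho_{\lambda,H_k}(t)$, so substituting the fOU identity term by term produces the stated series. The main obstacle is the $H \le 1/2$ calculation: the change of variables needed to expose the kernel of Lemma \ref{2integ} splits the integration region according to the sign of $u - v$, and the bookkeeping required to collapse these regions into the compact $\cosh$ kernel, rather than the $\gamma_{2H-1}$, $\Gamma_{2H-1}$ kernel of Proposition \ref{pro-4} which requires regularization in this regime, is exactly where Lemma \ref{2integ} earns its keep.
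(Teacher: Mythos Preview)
Your plan coincides with the paper's: integrate by parts from the $\sinh$-form for $H>1/2$, compute $\E[U_0^{\lambda,H}U_t^{\lambda,H}]$ directly from the Langevin representation for $H<1/2$ using pathwise integrals, the fBm covariance, and Lemma~\ref{2integ}, and then sum via Proposition~\ref{pro-5} for the mmfOU. The only tactical divergence is in the $H<1/2$ bookkeeping. The paper first splits $\rho_{\lambda,H}(t)=\e^{-\lambda t}\bigl\{\Var U_0^{\lambda,H}+\E[\int_{-\infty}^0 \e^{\lambda u}\d B_u^H\int_0^t \e^{\lambda v}\d B_v^H]\bigr\}$. For the variance it applies Lemma~\ref{2integ} directly with exponent $\alpha=2H$ (no preliminary integration by parts), obtaining $\Var U_0^{\lambda,H}=\lambda^{-2H}H\Gamma(2H)$. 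For the cross term, where $u<0<v$ keeps the kernel $|u-v|^{2H-2}$ away from the diagonal, it writes the inner integral as a lower incomplete gamma function and invokes the recursion $\gamma_\ell(z,x)=z^{-1}\bigl[\gamma_\ell(z+1,x)+x^z\e^{-x}\bigr]$ to shift the index from $2H-1$ to $2H$; this is the step that actually produces the $s^{2H-1}\cosh(\lambda t-s)$ integrand. Your proposed route---an extra integration by parts dropping the exponent to $2H-1$ before invoking Lemma~\ref{2integ}---would also work, but note that after your substitution $x=-\lambda u$, $y=\lambda v$ the domain is $[0,\infty)\times(-\infty,\lambda t]$, not $[0,\infty)^2$, so the decomposition into variance plus finite-range cross term is still needed before Lemma~\ref{2integ} applies.
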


\begin{proof}
For $H=1/2$ the right hand side of \eqref{rhoH-empiric<} is $\e^{-\lambda t}/2\lambda$ wich is equal to the outocovariance of the classical Ornstein--Uhlenbeck process with respect to the standard Brownian motion. For $H>1/2$, we obtain \eqref{rhoH-empiric<} from \eqref{rhoH-empiric>} via integration by part. To prove it for $H<1/2$, we will apply the same approach of the proof of {\it Lemma B.1} in \cite{Barboza-Viens-2017}
\begin{align*}
\rho_{\lambda,H}(t) =&\ \E [U^{\lambda ,H}_tU^{\lambda ,H}_0]\\
=&\ \E\left[ \int_{-\infty}^0 \e^{\lambda u}\, \d B^H_u \int_{-\infty}^t \e^{-\lambda(t-v)}\, \d B^H_v\right] \\
=&\ \e^{-\lambda t}\bigg\lbrace\Var(U^{\lambda ,H}_0)
+ 
\E\bigg[ \int_{-\infty}^0 \e^{\lambda u}\, \d B^H_u \int_0^t \e^{\lambda v}\, \d B^H_v\bigg]\bigg\rbrace.
\end{align*}
To obtain the term $\Var(U^{\lambda ,H}_0)$ in a close form, \cite{Barboza-Viens-2017} referred to {\it Lemma 5.2} in \cite{hu2010parameter}; 
however, it was only obtained for $H\geq1/2$, and so we need to extend their result for $H<1/2$.

Since
\begin{equation*}
U^{\lambda ,H}_0 = \int_{-\infty}^0 \e^{\lambda u}\, \d B^H_u = -\lambda\int_{-\infty}^0 \e^{\lambda u}B^H_u\,\d u,
\end{equation*}
we have
\begin{align*}
\Var(U^{\lambda ,H}_0) =&\Var\left[-\lambda\int_{-\infty}^0 \e^{\lambda u}B^H_u\,\d u\right]\\
=&\ \lambda^2\Var\left[\int_0^{\infty} \e^{-\lambda u}B^H_u\,\d u\right]\\
=&\ \lambda^2\E\left[\left(\int_0^{\infty} \e^{-\lambda u}B^H_u\,\d u\right)^2\right]\\
=&\ \lambda^2\E\left[\int_0^{\infty}\int_0^{\infty}\e^{-\lambda (u+v)}B^H_u B^H_v\,\d u\d v\right]\\
=&\ \frac{\lambda^2}{2}\int_0^{\infty}\int_0^{\infty}\e^{-\lambda (u+v)}\cdot\Big\lbrace u^{2H}+v^{2H}-|u-v|^{2H}\Big\rbrace\,\d u\d v\\
=&\ \frac{\lambda^2}{2}\bigg\lbrace 2\left(\int_0^{\infty}\e^{-\lambda u}\,\d u\right)\left(\int_0^{\infty}\e^{-\lambda v} v^{2H}\,\d v\right)
-\int_0^{\infty}\int_0^{\infty}\e^{-\lambda (u+v)}|u-v|^{2H}\,\d u\d v\bigg\rbrace,
\end{align*}
Now choosing $x=\lambda u,\,y=\lambda v$ and Lemma \ref{2integ} we have
\begin{align}
\Var (U^{\lambda ,H}_0) &=\frac{\lambda^{-2H}}{2}\bigg\lbrace 2\int_0^{\infty}\e^{-y} y^{2H}\,\d y
-\int_0^{\infty}\int_0^{\infty}\e^{-(x+y)}|x-y|^{2H}\,\d x\d y\bigg\rbrace\notag\\
&=\frac{\lambda^{-2H}}{2}\Big[ 2\Gamma(2H+1)-\Gamma(2H+1)\Big]\notag\\
&=\lambda^{-2H}H\Gamma(2H).\label{Var(U0)}
\end{align}
On the other hand, as in Lemma 2.1 in \cite{Cheridito-Kawaguchi-Maejima-2003} and the proof of Lemma B.1 in \cite{Barboza-Viens-2017}, using formula 
\begin{equation*}
\gamma_\ell(z,x) = \frac{\gamma_\ell(z+1,x)+x^z\e^{-x}}{z},
\end{equation*}
where $\gamma_\ell$ is the well-known lower Gamma function, for $H<1/2$ we have
\begin{align}
&\E\left[ \int_{-\infty}^0 \e^{\lambda u}\, \d B^H_u \int_0^t \e^{\lambda v}\, \d B^H_v\right]\label{E_int}\\
&=H(2H-1)\int_{-\infty}^0\int_0^t\e^{-\lambda (u+v)}|u-v|^{2H-2}\,\d u\d v\notag\\
&=\Var (U^{\lambda ,H}_0)\bigg\lbrace \frac{\e^{2\lambda t}-1}{2} -\frac{\lambda}{\Gamma(2H-1)}\int_0^t\e^{2\lambda v}\int_0^{\lambda v}\e^{-s}s^{2H-2}\,\d s\d v\bigg\rbrace\notag\\
&=\Var (U^{\lambda ,H}_0)\bigg\lbrace \frac{\e^{2\lambda t}-1}{2} -\frac{\lambda}{\Gamma(2H-1)}\int_0^t\e^{2\lambda v}\gamma_\ell(2H-1,\lambda v)\,\d v\bigg\rbrace\notag\\
&=\Var (U^{\lambda ,H}_0)\bigg\lbrace \frac{\e^{2\lambda t}-1}{2} -\frac{\lambda}{\Gamma(2H)}\int_0^t\e^{2\lambda v}\gamma_\ell(2H,\lambda v)\,\d v - \frac{\lambda^{2H}}{\Gamma(2H)}\int_0^t\e^{\lambda v}v^{2H-1}\,\d v\bigg\rbrace\notag\\
&=\Var (U^{\lambda ,H}_0)\bigg\lbrace \frac{\e^{2\lambda t}-1}{2} -\frac{\lambda}{\Gamma(2H)}\int_0^t\e^{2\lambda v}\int_0^{\lambda v}\e^{-s}s^{2H-1}\,\d s\d v - \frac{\lambda^{2H}}{\Gamma(2H)}\int_0^t\e^{\lambda v}v^{2H-1}\,\d v\bigg\rbrace.\notag
\end{align}
Using \eqref{Var(U0)} and \eqref{E_int}, with similar arguments as we did for \eqref{rhoH-empiric>} we obtain \eqref{rhoH-empiric<}. 
\end{proof}

\section{Long-Range Dependence}\label{sect:LRD}
The increments of fBm, the fGn,  is a well-known stationary process, that is long-range dependent (LRD) if $H>1/2$, and short-range dependent (SRD) in $H<\frac12$
Motivated with this, we consider the LRD of the increments the of mmfBm. 

For a lag $\delta$ and a process $X$ we denote
$
\Delta_\delta X_t = X_{t+\delta} - X_t.
$
Then
\begin{equation*}
\Delta_\delta M_t = \sum_{k=1}^\infty\sigma_k \Delta_\delta B^{H_k}_t
\end{equation*}
is stationary and its autocovariance function is denoted by 
\begin{equation*}
\varrho(\delta ;t)=\E\big[\Delta_\delta M_{s+t}\Delta_\delta M_s\big],
\end{equation*}

\begin{thm}\label{LRD_mmfBm}
For $t\to\infty$
\begin{equation}\label{asymp_mmfBm}
\varrho(\delta ;t)\sim \delta^2\sum_{k=1}^\infty\sigma_k^2H_k(2H_k-1)t^{2H_k-2}=O(t^{2H_{\sup}-2}).
\end{equation}
So the mmfBm increment process $\Delta_\delta M_t$ is LRD if and only if $H_k>1/2$ for some $k\geq 0$.
\end{thm}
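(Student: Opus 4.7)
The plan is to exploit independence of the component fBm's to reduce the mmfBm increment autocovariance to a termwise sum of fBm increment autocovariances, apply a Taylor expansion with a remainder that is uniform in the Hurst indices, and justify the interchange of the infinite sum with the large-$t$ limit. By mutual independence of $\{B^{H_k}\}_{k\in\N}$, all cross terms vanish and stationarity of increments yields
\begin{equation*}
\varrho(\delta;t) \;=\; \sum_{k=1}^\infty \sigma_k^2\,\varrho_{H_k}(\delta;t), \qquad \varrho_{H_k}(\delta;t)=\tfrac12\bigl[(t+\delta)^{2H_k}+(t-\delta)^{2H_k}-2t^{2H_k}\bigr],
\end{equation*}
directly from the fBm covariance formula \eqref{mmfbm-cov}.

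The heart of the argument is a uniform Taylor bound. For $t\ge 2\delta$, expanding $x\mapsto x^{2H_k}$ at $x=t$ to fourth order produces
$\varrho_{H_k}(\delta;t) = H_k(2H_k-1)\,\delta^2\, t^{2H_k-2} + R_k(\delta;t)$
with $|R_k(\delta;t)|\le C\delta^4 t^{2H_k-4}$, where $C$ is \emph{independent of $k$} because $|H_k(2H_k-1)(2H_k-2)(2H_k-3)|\le 6$ uniformly for $H_k\in(0,1)$ and $x\in[t-\delta,t+\delta]$ satisfies $x\ge t/2$. Summing termwise against $\sigma_k^2$ and invoking \eqref{ass1} yields
\begin{equation*}
\varrho(\delta;t) \;=\; \delta^2\sum_{k=1}^\infty \sigma_k^2 H_k(2H_k-1)\,t^{2H_k-2} \;+\; O\bigl(\delta^4 t^{2H_{\sup}-4}\bigr),
\end{equation*}
which establishes \eqref{asymp_mmfBm}; the $O(t^{2H_{\sup}-2})$ bound on the main sum follows from $|H_k(2H_k-1)|\le 1$ and $\sum_k\sigma_k^2<\infty$.

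For the LRD/SRD dichotomy: if some $H_{k_0}>\tfrac12$, then $\sigma_{k_0}^2 H_{k_0}(2H_{k_0}-1)t^{2H_{k_0}-2}$ is positive with exponent $2H_{k_0}-2>-1$, hence non-integrable at infinity; by the asymptotic it eventually dominates $|\varrho(\delta;t)|$, giving $\int^\infty|\varrho|\,dt=\infty$. If instead $H_k\le\tfrac12$ for every $k$, every summand in the main term has the same sign, and Tonelli together with the bound $T^{2H_k-1}\le 1$ for $T\ge 1$ gives $\int_T^\infty|\varrho(\delta;t)|\,dt \le 2\delta^2\sum_k \sigma_k^2 H_k \le 2\delta^2\sum_k\sigma_k^2<\infty$, i.e.\ SRD.

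The main obstacle is precisely the uniform-in-$k$ control of the Taylor remainder in the second step; without it, the interchange of the infinite sum with the large-$t$ limit cannot be legitimized from the $\ell^2$-summability of $\{\sigma_k\}$ alone, and the stated asymptotic would not follow simply from the known per-component asymptotic $\varrho_{H_k}(\delta;t)\sim H_k(2H_k-1)\delta^2 t^{2H_k-2}$.
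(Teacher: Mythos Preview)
Your argument is correct and follows essentially the same route as the paper's: expand each fBm increment autocovariance via Taylor/binomial, keep the second-order term, and sum against $\sigma_k^2$ using $\sum_k\sigma_k^2<\infty$. Your explicit fourth-order Lagrange remainder with the uniform-in-$k$ constant (from $|H_k(2H_k-1)(2H_k-2)(2H_k-3)|\le 6$ and $x\ge t/2$) is in fact more careful than the paper, which writes the binomial series, passes to the leading term with a ``$\sim$'', and then only verifies uniform convergence of the \emph{leading} series $\sum_k \sigma_k^2 H_k(2H_k-1)t^{2H_k-2}\le \sum_k\sigma_k^2$ to justify the limit--sum interchange; your version yields the sharper identity $\varrho(\delta;t)=\delta^2\sum_k\sigma_k^2H_k(2H_k-1)t^{2H_k-2}+O(\delta^4 t^{2H_{\sup}-4})$.

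One small imprecision in the LRD direction: a single term with $H_{k_0}>\tfrac12$ need not ``eventually dominate $|\varrho(\delta;t)|$'' pointwise, since other indices may lie strictly between $H_{k_0}$ and $H_{\sup}$ and contribute larger terms. The clean fix uses what you already have: split the main sum as $S(t)=S_+(t)+S_-(t)$ according to $H_k>\tfrac12$ or $H_k\le\tfrac12$; by Tonelli $\int_1^\infty S_+ =\infty$ (one summand already diverges), while your SRD bound gives $\int_1^\infty |S_-|<\infty$; then $|S|\ge S_+-|S_-|$ yields $\int_1^\infty |S|=\infty$, and the $O(t^{2H_{\sup}-4})$ remainder is integrable. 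The paper does not spell out this step either, simply reading the dichotomy off the asymptotic.
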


\begin{proof}
By using the generalized binomial theorem
\begin{eqnarray}
\varrho(\delta ;t) &=& \frac12\sum_{k=1}^\infty\sigma_k^2\Big\lbrace (t+\delta)^{2H_k} + (t-\delta)^{2H_k} - 2t^{2H_k}\Big\rbrace\nonumber\\
&=& \frac12\sum_{k=1}^\infty\sigma_k^2t^{2H_k}\Bigg\lbrace \Big(1+\frac{\delta}{t}\Big)^{2H_k} + \Big(1-\frac{\delta}{t}\Big)^{2H_k} - 2\Bigg\rbrace\nonumber\\
&=& \frac12\sum_{k=1}^\infty\sigma_k^2t^{2H_k}\Bigg\lbrace \sum_{r=0}^\infty\binom{2H_k}{r}\Big(\frac{\delta}{t}\Big)^r + \sum_{r=0}^\infty\binom{2H_k}{r}(-1)^r\Big(\frac{\delta}{t}\Big)^r - 2\Bigg\rbrace\nonumber\\
\label{lrdp}&\sim & \delta^2\sum_{k=1}^\infty\sigma_k^2H_k(2H_k-1)t^{2H_k-2}.
\end{eqnarray}
Since
\[
\sigma_k^2H_k(2H_k-1)t^{2H_k-2}\leq\sigma_k^2,
\]
the series \eqref{lrdp} is uniformly convergent. So we have
\begin{equation*}
\lim_{t\to\infty}\sum_{k=1}^\infty\sigma_k^2H_k(2H_k-1)t^{2H_k-2}=\sum_{k=1}^\infty\lim_{t\to\infty}\sigma_k^2H_k(2H_k-1)t^{2H_k-2}.
\end{equation*}
This yields \eqref{asymp_mmfBm}.
\end{proof}

To investigate LRD for the mmfOU process, we first need some lemmas.

The following theorem shows that similar to the mmfBm increment process, the long-range dependence of the mmfOU is governed by the long-range dependence of the largest Hurst index in the driving mmfBm.
\begin{thm}\label{LRD_thm}
For $t\to\infty$ and each $N=1,2,\ldots$
\begin{equation}
\rho_\lambda(t)=\frac12\sum_{k=1}^\infty\sum_{n=1}^N\sigma^2_k\lambda^{-2n}\left(\prod_{j=0}^{2n-1}(2H_k-j)\right)t^{2H_k-2n} + O(t^{2H_{\sup}-2N-2}).\label{asymp_mmfOU}
\end{equation} 
So the mmfOU process $U$ is LRD if and only if $H_k>1/2$ for some $k\geq 0$.
\end{thm}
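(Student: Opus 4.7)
The plan is to reduce the statement for the mmfOU to the known asymptotic expansion for a single fOU and then pass the sum in $k$ through the expansion, using summability of the $\sigma_k^2$'s together with uniform control of both the coefficients and the remainder in $k$. By Proposition \ref{pro-5}, the mmfOU covariance decomposes as
\begin{equation*}
\rho_\lambda(t) \;=\; \sum_{k=1}^\infty \sigma_k^2\, \rho_{\lambda,H_k}(t),
\end{equation*}
so it suffices to insert the classical fOU expansion, already quoted in the introduction from \cite{Cheridito-Kawaguchi-Maejima-2003}, namely
\begin{equation*}
\rho_{\lambda,H}(t) \;=\; \tfrac12\sum_{n=1}^N \lambda^{-2n}\!\left(\prod_{j=0}^{2n-1}(2H-j)\right) t^{2H-2n} + O(t^{2H-2N-2}),\qquad t\to\infty,
\end{equation*}
for each $H = H_k$, and then justify swapping $\sum_k$ with the asymptotic expansion.

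For the leading terms, I would set $c_{k,n} = \lambda^{-2n}\prod_{j=0}^{2n-1}(2H_k-j)$. Since $H_k \in (0,1)$, each factor $|2H_k - j|$ is bounded by $j+2$, so $|c_{k,n}| \le C_n$ uniformly in $k$. Combined with $\sum_k \sigma_k^2 < \infty$, the double sum $\sum_{k,n}\sigma_k^2 c_{k,n} t^{2H_k-2n}$ converges absolutely for every fixed $t$, which legitimizes the termwise insertion. For the remainder, I would extract from the proof of the fOU expansion a quantitative bound of the form $|R_{N,k}(t)| \le C_N\, t^{2H_k-2N-2}$ valid for all $t \ge t_0$ with $t_0$ and $C_N$ \emph{independent of $k$} (this uses only $H_k \in [H_{\inf},H_{\sup}]\subset(0,1)$). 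Since $t^{2H_k-2N-2}\le t^{2H_{\sup}-2N-2}$ once $t\ge 1$, the total remainder is bounded by $C_N\,t^{2H_{\sup}-2N-2}\sum_k \sigma_k^2$, giving the claimed $O(t^{2H_{\sup}-2N-2})$. This uniform remainder control is the main technical point; if the cited source does not state it in uniform form, I would redo the integration-by-parts/Taylor expansion used there, keeping explicit track of constants and checking that they depend on $H$ only through $H_{\inf}$ and $H_{\sup}$.

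For the LRD conclusion, take $N=1$ in \eqref{asymp_mmfOU}. The leading coefficient is
\begin{equation*}
\tfrac12\sum_{k=1}^\infty \sigma_k^2 \lambda^{-2} \cdot 2H_k(2H_k-1)\, t^{2H_k-2},
\end{equation*}
up to an $O(t^{2H_{\sup}-4})$ term. If $H_k\le 1/2$ for every $k$, then $2H_k - 2\le -1$ and the uniform bound $|\rho_\lambda(t)|\le C\,t^{2H_{\sup}-2}$ yields $\int_1^\infty |\rho_\lambda(t)|\,dt<\infty$, hence short-range dependence. Conversely, if some $H_{k_0}>1/2$, isolate that term: $\rho_\lambda(t) = \sigma_{k_0}^2 \rho_{\lambda,H_{k_0}}(t) + \sum_{k\ne k_0}\sigma_k^2\rho_{\lambda,H_k}(t)$; the first summand behaves like a positive multiple of $t^{2H_{k_0}-2}$, which is not integrable at infinity, while the same uniform expansion applied to the tail shows the remaining sum cannot cancel it for large $t$. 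Thus $\rho_\lambda\notin L^1([1,\infty))$, establishing long-range dependence and completing the equivalence.
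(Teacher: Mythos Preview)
Your proposal is correct and follows essentially the same route as the paper: decompose $\rho_\lambda=\sum_k\sigma_k^2\rho_{\lambda,H_k}$, feed in the Cheridito--Kawaguchi--Maejima expansion for each component, and justify the interchange by extracting a remainder bound that is uniform in $k$ via $H_k\in[H_{\inf},H_{\sup}]$ and the summability of $\sigma_k^2$. The paper carries out explicitly what you describe as the contingency plan---it reopens the integral computation from \cite{Cheridito-Kawaguchi-Maejima-2003} and writes down concrete $k$-independent constants $\Lambda_N,\Pi_N$ bounding the remainder, whereas you leave this as ``would redo the integration-by-parts \ldots\ keeping explicit track of constants''; but the strategy, the key technical point (uniformity of the remainder), and the LRD dichotomy argument are the same.
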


\begin{proof}
By the proof of Lemma 2.2 and Theorem 2.3 in \cite{Cheridito-Kawaguchi-Maejima-2003}
\begin{eqnarray}\label{asymp_rho}
\qquad&\rho_\lambda(t)=&\E\left[ \int_{-\infty}^0 \e^{\lambda u}\, \d M_u \int_{-\infty}^t \e^{-\lambda(t-v)}\, \d M_v\right]\\
&=& \e^{-\lambda t}\E\left[ \int_{-\infty}^0 \e^{\lambda u}\, \d M_u \int_{-\infty}^{1/\lambda} \e^{\lambda v}\, \d M_v\right]\notag\\
&+& \e^{-\lambda t}\sum_{i=1}^{\infty}\sigma_i^2 H_i(2H_i-1)
\int_{-\infty}^0 \e^{\lambda u} \left(\int_{1/\lambda}^t \e^{\lambda v}(v-u)^{2H_i-2}\,\d v\right)\d u\notag\\
&=& O(\e^{-\lambda t}) + \frac12\sum_{i=1}^{\infty}\sigma_i^2 \frac{H_i(2H_i-1)}{\lambda^{2H_i}}
\Bigg\lbrace\e^{-\lambda t}\int_1^{\lambda t}\e^y y^{2H_i-2}\,\d y
+  \e^{\lambda t}\int_{\lambda t}^{\infty} \e^{-y} y^{2H_i-2}\,\d y\Bigg\rbrace\notag\\
&\leq & O(\e^{-\lambda t})
+ \frac12\sum_{k=1}^\infty\sum_{n=1}^N\sigma^2_k\lambda^{-2n}\left(\prod_{j=0}^{2n-1}(2H_k-j)\right)t^{2H_k-2n}\notag\\
&+& \frac12\sum_{k=1}^{\infty}\sigma_k^2 \frac{\Big|H_k(2H_k-1)\cdots (2H_k-2-2N)\Big|}{\lambda^{2H_k}}
 \left[\e^{-\frac{\lambda t}{2}} + (1+2^{2H_k-2N-3})(\lambda t)^{2H_k-2N-3}\right].\notag
\end{eqnarray}
Now, for $t\in [1,\infty)$
\begin{eqnarray*}
&\sigma_k^2& \frac{\Big|H_k(2H_k-1)\cdots (2H_k-2-2N)\Big|}{\lambda^{2H_k}}
\,\e^{-\frac{\lambda t}{2}}<\Lambda_N\sigma_k^2\\
&\sigma_k^2& \frac{\Big|H_k(2H_k-1)\cdots (2H_k-2-2N)\Big|}{\lambda^{2H_k}}
\,(1+2^{2H_k-2N-3})(\lambda t)^{2H_k-2N-3}<\Pi_N\sigma_k^2,
\end{eqnarray*}
where
\begin{eqnarray*}
\Lambda_N &=& H_{\sup}\frac{\max\Big(|2H_{\inf}-1|,|2H_{\sup}-1|\Big)}{\max\Big(\lambda^{2H_{\inf}},\lambda^{2H_{\sup}}\Big)}
\Big|(2H_{\inf}-2)\cdots (2H_{\inf}-2-2N)\Big|,\\
\Pi_N &=& H_{\sup}\frac{\max\Big(|2H_{\inf}-1|,|2H_{\sup}-1|\Big)}{\lambda^{2N+3}}
\Big|(2H_{\inf}-2)\cdots (2H_{\inf}-2-2N)\Big|
(1+2^{2H_{\sup}-2N-3}).
\end{eqnarray*}
So, as $\sum_{k=1}^\infty\sigma_k^2<\infty$, the series in the right-hand side of the inequality \eqref{asymp_rho} is uniformly convergent on $t\in [1,\infty)$. Hence
\begin{eqnarray*}
\lim_{t\to\infty}\sum_{k=1}^{\infty}\sigma_k^2 \frac{\Big|H_k(2H_k-1)\cdots (2H_k-2-2N)\Big|}{\lambda^{2H_k}}\left[\e^{-\frac{\lambda t}{2}} + (1+2^{2H_k-2N-3})(\lambda t)^{2H_k-2N-3}\right]\\
=\sum_{k=1}^{\infty}\sigma_k^2 \frac{\Big|H_k(2H_k-1)\cdots (2H_k-2-2N)\Big|}{\lambda^{2H_k}}\lim_{t\to\infty}\left[\e^{-\frac{\lambda t}{2}} + (1+2^{2H_k-2N-3})(\lambda t)^{2H_k-2N-3}\right].
\end{eqnarray*}
This proves \eqref{asymp_mmfOU}.
\end{proof}


\section{Continuity}\label{sect:Continuity}

\begin{thm}\label{thm 1.3}
Both mmfBm and mmfOU have Hölder index $H_{\inf}$.
\end{thm}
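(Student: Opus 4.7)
The plan is to sandwich the H\"older index of $M$ between matching bounds using the incremental variance and Gaussianity, and then transfer the result to $U$ via a Lipschitz correction. For the lower bound, I would start from
$$
\E\left[(M_t-M_s)^2\right] = \sum_{k=1}^\infty \sigma_k^2 |t-s|^{2H_k} \le |t-s|^{2H_{\inf}}\max(1,T^{2(H_{\sup}-H_{\inf})})\sum_{k=1}^\infty \sigma_k^2
$$
for $s,t\in[0,T]$, since each factor $|t-s|^{2(H_k-H_{\inf})}$ is uniformly bounded by $\max(1, T^{2(H_{\sup}-H_{\inf})})$. Gaussianity upgrades this to $\E[|M_t-M_s|^p]\le C_{p,T}|t-s|^{pH_{\inf}}$ for every $p\ge 1$, and the Kolmogorov continuity criterion (letting $p\to\infty$), or equivalently Theorem 1 of \cite{Azmoodeh-Sottinen-Viitasaari-Yazigi-2014} cited in the introduction, yields a H\"older-continuous modification of any order $H<H_{\inf}$. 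Hence $\mathrm{Hol}_T(M)\ge H_{\inf}$.

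For the upper bound, I would fix any $\alpha>H_{\inf}$ and pick some index $k$ with $H_k<\alpha$, which exists by the definition of the infimum. The centered Gaussian variables
$$
G_n := n^\alpha (M_{1/n}-M_0)
$$
satisfy $\E[G_n^2]\ge \sigma_k^2 n^{2(\alpha-H_k)}\to\infty$, so $\P(|G_n|>L)\to 1$ for every $L>0$. Reverse Fatou on $\{|G_n|>L\}$ then gives $\P(|G_n|>L \mbox{ i.o.})=1$, and intersecting over $L\in\N$ yields $\sup_n |G_n|=\infty$ almost surely. Since $1/n\le T$ for $n$ large, this forces $\sup_{s,t\in[0,T]}|M_t-M_s|/|t-s|^\alpha=\infty$ almost surely, so $M$ is not H\"older of order $\alpha$. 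As $\alpha>H_{\inf}$ was arbitrary, $\mathrm{Hol}_T(M)\le H_{\inf}$.

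For the mmfOU, integration-by-parts in the representation of Proposition \ref{pro-3} produces
$$
U_t = \e^{-\lambda t}\xi + M_t - \lambda \int_0^t \e^{-\lambda(t-s)} M_s\,\d s,
$$
so $U_t-M_t$ is almost surely $C^1$ in $t$ (hence Lipschitz on $[0,T]$) once the sample-path continuity of $M$ established above is in hand. Adding a Lipschitz function does not alter the H\"older index of a process whose H\"older index is strictly less than $1$, and so $\mathrm{Hol}_T(U)=\mathrm{Hol}_T(M)=H_{\inf}$. The main obstacle is the upper bound for $M$ when $H_{\inf}$ is not attained: the tempting ``subtract the roughest fBm component'' decomposition fails there, because the remaining mmfBm can still contain Hurst indices arbitrarily close to $H_{\inf}$; the Fatou argument circumvents this by exploiting only the single-component lower bound $\E[(M_t-M_s)^2]\ge \sigma_k^2|t-s|^{2H_k}$, which is insensitive to the behavior of the other components.
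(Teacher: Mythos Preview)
Your argument is correct and diverges from the paper's in two places. For the upper bound on $\mathrm{Hol}_T(M)$ the paper picks $j$ with $H_j<H_{\inf}+\varepsilon$, observes that $B^{H_j}$ is not $(H_{\inf}+\varepsilon)$-H\"older, and then asserts that $M=\sigma_jB^{H_j}+\sum_{k\ne j}\sigma_kB^{H_k}$ is not either; this is precisely the decomposition you warned against, and it is incomplete whenever other $H_k$'s accumulate at $H_{\inf}$, since then the remainder $\sum_{k\ne j}\sigma_kB^{H_k}$ need not be $(H_{\inf}+\varepsilon)$-H\"older and the subtraction step is unavailable. Your reverse-Fatou argument, which uses only the one-component variance lower bound $\E[(M_t-M_s)^2]\ge\sigma_k^2|t-s|^{2H_k}$, closes this gap cleanly and works whether or not $H_{\inf}$ is attained. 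For the mmfOU the paper does not transfer from $M$ at all but instead verifies a spectral criterion (Corollary~2 of \cite{Azmoodeh-Sottinen-Viitasaari-Yazigi-2014}) directly from the explicit density \eqref{mmfou-sd}, and then repeats the same decomposition step for the upper bound; your Lipschitz-correction via the integrated Langevin representation is more elementary and carries both bounds over simultaneously, at the price of invoking pathwise continuity of $M$ rather than working purely with second-order data.
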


\begin{proof}
For $\epsilon >0$ and $|t-s|<1$, the mmfBm satisfies
\begin{equation*}
\E\Big[(M_t-M_s)^2\Big] = \sum_{k=1}^\infty\sigma_k^2 |t-s|^{2H_k}
\leq\bigg(\sum_{k=1}^\infty\sigma_k^2\bigg)|t-s|^{2H_{\inf}-\epsilon}
= C_0|t-s|^{2H_{\inf}-\epsilon},
\end{equation*}
where $C_0:=\sum_{k=1}^\infty\sigma_k^2>0$. Thus, H\"older continuity with exponent $H_{\inf}-\varepsilon$ follows from Theorem 1 of  \cite{Azmoodeh-Sottinen-Viitasaari-Yazigi-2014}. On the other hand, for some $j\geq 1$ we have $H_{\inf}\leq H_j<H_{\inf}+\epsilon$ and so the fBm $B^{H_j}$ is not $(H_{\inf}+\epsilon)$-H\"older continuous. Hence the process $M = \sigma_j B^{H_j}+\sum_{k\neq j} \sigma_k B^{H_k}$ is not $(H_{\inf}+\epsilon)$-H\"older continuous. This proves the claim for mmfBm.

For the mmfOU, we apply the Corollary 2 of \cite{Azmoodeh-Sottinen-Viitasaari-Yazigi-2014}. That states that the stationary process $U$ is Hölder-continuous with any exponent $0<a<H_{\inf}$ if and only if for each $0<\epsilon<2H_{\inf}$, there is some $0<\delta <1$ that
\begin{equation}\label{eq 37}
\int_0^\infty (1-\cos(sx))f_{\lambda}(x)dx<C_\epsilon s^{2H_{\inf}-\epsilon},\quad s\in (0,\delta).
\end{equation}
This is equivalent to have
\begin{equation*}
\int_0^\infty \frac{(1-\cos(sx))}{s^{2H_{\inf}-\epsilon}}f_{\lambda}(x)dx<C_\epsilon,\quad s\in (0,\delta).
\end{equation*}
To show this, here for $s<1$ we have
\begin{align*}
&\int_0^\infty \frac{(1-\cos(sx))}{s^{2H_k-\epsilon}}f_{\lambda,H_k}(x)dx\notag\\
&=s^\epsilon c_{H_k}\int_0^\infty (1-\cos(sx))\frac{x\cdot (sx)^{-2H_k}}{\lambda^2+x^2}dx\notag\\
&=s^\epsilon c_{H_k}\int_0^\infty (1-\cos u)\frac{u^{1-2H_k}}{s^2\lambda^2+u^2}du\quad (u=sx)\notag\\
&\leq c_{H_k}\int_0^\infty (1-\cos u)\frac{u^{1-2H_k}}{s^2\lambda^2+u^2}du\quad (0<s<1)\notag\\
&\leq c_{H_k}\Big\lbrace\int_0^\epsilon (1-\cos u)\frac{u^{1-2H_k}}{u^2}du + \int_\epsilon^\infty \frac{u^{1-2H_k}}{u^2}du\Big\rbrace\notag\\
&=c_{H_k}\Big\lbrace\int_0^\epsilon\frac{2\sin^2(\frac{u}{2})}{u^2}u^{1-2H_k}du +\int_\epsilon^\infty u^{-1-2H_k}du\Big\rbrace\notag\\
&\leq c_{H_k}\Big\lbrace\int_0^\epsilon\frac{1}{2}u^{1-2H_k}du +\int_\epsilon^\infty u^{-1-2H_k}du\Big\rbrace\notag\\
&=c_{H_k}\Big\lbrace\frac{\epsilon^{2-2H_k}}{4(1-H_k)}+\frac{\epsilon^{-2H_k}}{2H_k}\Big\rbrace =:C_{\epsilon,H_k}<\infty.
\end{align*}
Therefore,
\begin{equation}
\int_0^\infty (1-\cos(sx))f_{\lambda,H_k}(x)dx\leq C_{\epsilon,H_k}s^{2H_k-\epsilon}\leq C_{\epsilon,H_k}s^{2H_{\inf}-\epsilon}.\label{eq 42}
\end{equation}
Also, we have
\begin{align}
\sum_{k=1}^\infty \sigma_k^2 C_{\epsilon,H_k}&=\sum_{k=1}^\infty\sigma_k^2\, \frac{\sin(\pi H_k)\Gamma(1+2H_k)}{2\pi}\Big\lbrace\frac{\epsilon^{2-2H_k}}{4(1-H_k)}+\frac{\epsilon^{-2H_k}}{2H_k}\Big\rbrace\notag\\
&\leq\frac{\Gamma(3)}{2\pi}\Big\lbrace\frac{\epsilon^{2-2H_{sup}}}{4(1-H_{sup})}+\frac{\epsilon^{-2H_{inf}}}{2H_{inf}}\Big\rbrace\Big(\sum_{k=1}^\infty\sigma_k^2\Big)=:C_\epsilon<\infty,\label{eq 43}
\end{align}
if and only if $0<H_{\inf}\leq H_{\sup}<1$ and $\sum_{k=1}^{\infty}\sigma_k^2<\infty$. Now, \eqref{eq 42} and \eqref{eq 43} yield \eqref{eq 37}. Moreover, for some $j\geq 1$ we have $H_{\inf}\leq H_j<H_{\inf}+\epsilon$ and so the fOU $U^{H_j}$ is not $(H_{\inf}+\epsilon)$-H\"older continuous. Hence the process $U = \sigma_j U^{H_j}+\sum_{k\neq j} \sigma_k U^{H_k}$ is not $(H_{\inf}+\epsilon)$-H\"older continuous. This proves the claim for mmfOU.
\end{proof}

\section{$p$-Variation}\label{sect:variation}

\begin{thm}\label{thm:p-v}
For $p>0$, the equidistant $p$-variations of the mmfBm $M$ and the mmfOU $U$ on the time-interval $[0,T]$ are equal and
\begin{equation}\label{eq:p-v}
V^p_T(M)=V^p_T(U)=
\left\lb
\begin{array}{lll}
\infty & ; & pH_{\inf}<1\\\\
T\Bigg(\displaystyle\underset{H_i=H_{\inf}}{\sum}\sigma_i^2\Bigg)^{p/2}\mu_p & ; & pH_{\inf}=1\\\\
0 & ; & pH_{\inf}>1
\end{array}
\right.
\end{equation}
\end{thm}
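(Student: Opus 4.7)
My plan is to establish the formula for the mmfBm $M$ via a moment and concentration argument on the partition sums $V_n^p(M) := \sum_{k=1}^n |M_{t_k} - M_{t_{k-1}}|^p$, and then to transfer the result to the mmfOU $U$ using the Langevin representation from Proposition \ref{pro-3}. The increments $\Delta_n^k M := M_{t_k} - M_{t_{k-1}}$ are stationary centered Gaussians with common variance $v_n := \sum_{i=1}^\infty \sigma_i^2 (T/n)^{2H_i}$, so $\E V_n^p(M) = n\mu_p v_n^{p/2}$. Factoring $v_n = (T/n)^{2H_{\inf}} S_n$ with $S_n = \sum_i \sigma_i^2 (T/n)^{2(H_i - H_{\inf})}$, dominated convergence (with summable dominator $\sigma_i^2$) gives $S_n \to A := \sum_{i:\, H_i = H_{\inf}} \sigma_i^2$. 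Thus $\E V_n^p(M) \to T \mu_p A^{p/2}$ in the critical case $pH_{\inf} = 1$ and $\E V_n^p(M) \to 0$ in the super-critical case $pH_{\inf} > 1$; in the sub-critical case $\E V_n^p(M) \to \infty$ via the lower bound $v_n \geq \sigma_j^2 (T/n)^{2H_j}$ for any fixed $j$ with $pH_j < 1$.

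To promote these $L^1$-level statements to convergence in probability, I would use a second-moment estimate. Since $x \mapsto |x|^p$ is even, its Hermite expansion has rank $2$, so by the Mehler/diagram formula, for jointly centered Gaussian $X, Y$ of variance $v_n$ and correlation $\rho$ one has $|\Cov(|X|^p, |Y|^p)| \leq C_p\, v_n^p\, \rho^2$ uniformly for $\rho \in [-1,1]$. Let $\rho_n(j)$ denote the correlation between $\Delta_n^k M$ and $\Delta_n^{k+j} M$, and set $\phi_H(j) := \tfrac12(|j+1|^{2H} + |j-1|^{2H} - 2|j|^{2H})$, so $v_n \rho_n(j) = \sum_i \sigma_i^2 (T/n)^{2H_i}\phi_{H_i}(j)$. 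The uniform estimate $|\phi_H(j)| \leq C|j|^{2H-2}$ for $|j| \geq 1$ and $H \in (0,1)$ gives $|\rho_n(j)| \leq C'|j|^{2H_{\sup}-2}$, whence $\sum_{j=1}^{n-1} \rho_n(j)^2 = O\bigl(1 + n^{\max(4H_{\sup}-3,\,0)} \log n\bigr)$, and therefore
\[
\frac{\Var(V_n^p(M))}{(\E V_n^p(M))^2} = O\!\left(n^{\max(4H_{\sup}-3,\,0) - 1}\log n\right) \to 0
\]
thanks to the standing assumption $H_{\sup} < 1$. Hence $V_n^p(M)/\E V_n^p(M) \to 1$ in $L^2$, which together with the asymptotics of $\E V_n^p(M)$ yields \eqref{eq:p-v} for $M$.

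For the mmfOU, Proposition \ref{pro-3} and integration by parts give $U_{t_k} - U_{t_{k-1}} = (M_{t_k} - M_{t_{k-1}}) + R_k^n$ with $|R_k^n| \leq K_\omega (T/n)$ for an almost surely finite random constant $K_\omega$ depending on $\xi$ and $\sup_{[0,T]} |M|$. Consequently $\sum_k |R_k^n|^p \leq K_\omega^p T^p n^{1-p}$, which is of strictly smaller order than $V_n^p(M)$ in all three regimes: in the sub-critical case this follows from $\E V_n^p(M) \geq C n^{1 - pH_j}$ with $H_j < 1$, giving ratio $O(n^{-p(1-H_j)}) \to 0$; in the critical and super-critical cases the inequality $pH_{\inf} \geq 1$ combined with $H_{\inf} < 1$ forces $p > 1$, so the drift contribution itself tends to zero. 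Applying Minkowski's inequality (for $p \geq 1$) or subadditivity of $a \mapsto |a|^p$ (for $0 < p < 1$) to $\Delta_n^k U = \Delta_n^k M + R_k^n$ then transfers the limit from $V_n^p(M)$ to $V_n^p(U)$.

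The main obstacle will be the variance estimate in the second paragraph. Unlike in the fBm case, where self-similarity reduces the $p$-variation to Birkhoff's ergodic theorem applied to a fixed stationary fGn sequence, here the covariance structure of $(\Delta_n^k M)_k$ depends on $n$, and no such reduction is available. The Hermite-rank-$2$ identity combined with the decay estimate $|\rho_n(j)| \leq C'|j|^{2H_{\sup}-2}$ reduces matters to the summability question above, whose worst case is just controlled by $H_{\sup} < 1$; it is markedly simpler in the short-range regime $H_{\sup} < 3/4$, where $\sum_j \rho_n(j)^2 = O(1)$ uniformly in $n$.
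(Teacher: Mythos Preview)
Your argument is correct and differs from the paper's in two places. For the mmfBm, both start from the factorisation $V_n^p(M)\stackrel{d}{=}\bigl(\sum_i\sigma_i^2 T^{2H_i}n^{2/p-2H_i}\bigr)^{p/2}\cdot\frac{1}{n}\sum_k|Z_k|^p$ with unit-variance Gaussians $Z_k$. The paper then asserts $\frac{1}{n}\sum_k|Z_k|^p\to\mu_p$ by citing an ergodic-type lemma for stationary Gaussian sequences; this is the very point you flag as the main obstacle, since the correlation structure of $(Z_k)$ changes with $n$ and there is no self-similarity reducing the problem to a single fixed fGn. Your Hermite-rank-$2$ second-moment bound, together with the uniform decay $|\rho_n(j)|\le C|j|^{2H_{\sup}-2}$, handles this triangular array explicitly, and the standing assumption $H_{\sup}<1$ is exactly what forces $\Var(V_n^p)/(\E V_n^p)^2\to0$. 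For the mmfOU, the paper instead computes $\Var[U_{T/n}-U_0]$ directly from the explicit covariance of Theorem~\ref{thm:tho-empiric} and Taylor-expands to recover the same small-increment variance $\sim\sum_i\sigma_i^2(T/n)^{2H_i}$, after which the mmfBm argument is repeated verbatim. Your route via the Langevin equation, writing $\Delta_n^kU=\Delta_n^kM-\lambda\int_{t_{k-1}}^{t_k}U_r\,\d r$ and bounding the drift contribution pathwise, is more conceptual: it avoids the special-function calculus entirely and would transfer unchanged to any Langevin-type equation driven by a noise whose $p$-variation is already known.
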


\begin{proof} For the mmfBm $M$, We have
\begin{eqnarray*}
v^p_{\pi_n}(M)&:=&\sum_{t_k\in\pi_n}|\Delta M_{t_k}|^p\\
&=&\sum_{t_k\in\pi_n}\left|\sum_{i=1}^{\infty}\sigma_i^2(\Delta t_k)^{2H_i}\right|^{p/2}\left|\frac{\Delta M_{t_k}}{\Big[\sum_{i=1}^{\infty}\sigma_i^2(\Delta t_k)^{2H_i}\Big]^{1/2}}\right|^p\\
&\overset{d}{=}&\left(\sum_{i=1}^{\infty}\sigma_i^2 T^{2H_i}n^{2/p-2H_i}\right)^{p/2}\cdot\frac{1}{n}\sum_{k=1}^n|Z_k|^p.
\end{eqnarray*}
as $|\pi_n|\to 0$ or equivalently $n\to\infty$. Here $(Z_k)$ is a stationary Gaussian process and so by the proof of Lemma 3.7 in \cite{Sottinen-2003}
\begin{equation*}
\frac{1}{n}\sum_{k=1}^n|Z_k|^p\to\mu_p,
\end{equation*}
as $n\to\infty$, where $\mu_p$ is the $p$th absolute moment of the standard Gaussian process. Now, if $pH_{\inf}<1$ then $H_{\inf}<1/p$, and so there exists some $j\geq 1$ that $H_j<1/p$, and so $2/p-2H_j>0$. Therefore
\begin{equation*}
v^p_{\pi_n}(M)\geq\left(\sigma_j^2 T^{2H_j}n^{2/p-2H_j}\right)^{p/2}\cdot\frac{1}{n}\sum_{k=1}^n|Z_k|^p\to\infty.
\end{equation*}
On the other hand, if $pH_{\inf}\geq 1$ for $x\in (1,\infty)$
\begin{equation*}
\sigma_i^2 T^{2H_i}x^{2/p-2H_i}\leq\sigma_i^2T^2,
\end{equation*}
and because $\sum_{i=1}^{\infty}\sigma_i^2<\infty$, the $\sum_{i=1}^{\infty}\sigma_i^2 T^{2H_i}x^{2/p-2H_i}$ is uniformly convergent on $x\in [1,\infty)$. So for $pH_{\inf}\geq 1$
\begin{equation*}
\lim_{n\to\infty}\sum_{i=1}^{\infty}\sigma_i^2 T^{2H_i}n^{2/p-2H_i}=\sum_{i=1}^{\infty}\lim_{n\to\infty}\sigma_i^2 T^{2H_i}n^{2/p-2H_i}.
\end{equation*}
This yields the values mentioned in \eqref{eq:p-v} are correct for the $p$-variation of $M$. For the mmfOU $U$, as it is stationary we have
\begin{equation*}
v^p_{\pi_n}(U) :=\sum_{t_k\in\pi_n}|\Delta U_{t_k}|^p
\overset{d}{=}\sum_{k=1}^n\Big(\Var [\Delta U_{t_1}]\Big)^{p/2}|Z_k|^p
=n\Big(\Var [U_{\frac{T}{n}}-U_0]\Big)^{p/2}\cdot\frac{1}{n}\sum_{k=1}^n|Z_k|^p.
\end{equation*}
As $\frac{1}{n}\sum_{k=1}^n|Z_k|^p\to\mu_p$ for $n\to\infty$, problem vanish to find
\begin{equation*}
\lim_{n\to\infty}n\Big(\Var [U_{\frac{T}{n}}-U_0]\Big)^{p/2}.
\end{equation*}
To find it, again because $U$ is stationary, and using the proof of Theorem \ref{thm:tho-empiric} we have
\begin{eqnarray*}
\Var [U_{\frac{T}{n}}-U_0] &=& \Var U_{\frac{T}{n}} + \Var U_0 - 2\,\Cov\Big(U_{\frac{T}{n}},U_0\Big)\\
&=& 2\,\Var U_0 - 2\,\Cov\Big(U_{\frac{T}{n}},U_0\Big)\\
&=& 2\sum_{i=1}^\infty\sigma_i^2\lambda^{-2H_i}H_i\Gamma(2H_i)
-2\sum_{i=1}^\infty\sigma_i^2\frac{\Gamma(2H_i+1)}{2\lambda^{2H_i}}\bigg\lbrace\cosh\Big(\frac{\lambda T}{n}\Big)\\
& & - \frac{1}{\Gamma(2H_i)}\int_0^{\frac{\lambda T}{n}}s^{2H_i-1}\cosh\Big(\frac{\lambda T}{n}-s\Big)\,\d s\bigg\rbrace\\
&=& \sum_{i=1}^\infty\sigma_i^2\frac{\Gamma(2H_i+1)}{\lambda^{2H_i}}\bigg\lbrace 1 - \cosh\Big(\frac{\lambda T}{n}\Big)
+ \frac{1}{\Gamma(2H_i)}\int_0^{\frac{\lambda T}{n}}s^{2H_i-1}\cosh\Big(\frac{\lambda T}{n}-s\Big)\,\d s\bigg\rbrace.
\end{eqnarray*}
For the large values of $n$ the final series in the right hand side above, is uniformly convergent. So, the $\displaystyle\lim_{n\to\infty}$ and $\sum_{i=1}^\infty$ could change places. This yields
\begin{eqnarray*}
\lefteqn{\lim_{n\to\infty}n\Big(\Var [U_{\frac{T}{n}}-U_0]\Big)^{p/2}} \\
&=&\lim_{n\to\infty}\Big(n^{2/p}\,\Var [U_{\frac{T}{n}}-U_0]\Big)^{p/2}\\
&=& \Bigg(\sum_{i=1}^\infty\sigma_i^2\frac{\Gamma(2H_i+1)}{\lambda^{2H_i}}\cdot\lim_{n\to\infty}n^{2/p}\bigg\lbrace 1 - \cosh\Big(\frac{\lambda T}{n}\Big)
+\,\frac{1}{\Gamma(2H_i)}\int_0^{\frac{\lambda T}{n}}s^{2H_i-1}\cosh\Big(\frac{\lambda T}{n}-s\Big)\,\d s\bigg\rbrace\Bigg)^{p/2}.\\
\end{eqnarray*}
Now for $t\to 0$, by the Taylor expansion 
\begin{equation*}
1-\cosh t = -\sum_{r=1}^\infty \frac{t^{2r}}{(2r)!},
\end{equation*} 
and via integration by parts
\begin{equation*}
\int_0^t s^{2H_i-1}\cosh(t-s)\,\d s = \frac{t^{2H_i}}{2H_i} + \frac{1}{2H_i}\int_0^t s^{2H_i}\sinh(t-s)\,\d s.
\end{equation*}
Again for $t\to 0$, by the Taylor expansion
\begin{equation*}
\int_0^t s^{2H_i}\sinh(t-s)\,\d s \leq  \int_0^t t^{2H_i}\sinh t\,\d s
= t^{2H_i+1}\sinh t
= \sum_{r=1}^\infty \frac{t^{2r+2H_i}}{(2r-1)!}.
\end{equation*}
These yield for $t\to 0$
\begin{equation*}
1-\cosh t + \frac{1}{\Gamma(2H_i)}\int_0^t s^{2H_i-1}\cosh(t-s)\,\d s\sim\frac{t^{2H_i}}{2H_i+1}.
\end{equation*}
Therefore
\begin{equation*}
\lim_{n\to\infty}n\Big(\Var [U_{\frac{T}{n}}-U_0]\Big)^{p/2}=\bigg(\sum_{i=1}^\infty\sigma_i^2T^{2H_i}\lim_{n\to\infty}n^{2/p-2H_i}\bigg)^{p/2},
\end{equation*}
this proves \eqref{eq:p-v}.
\end{proof}

\section{Conditional Full Support}\label{sect:CFS}

As explained in \cite{Bender-Sottinen-Valkeila-2008}, in mathematical finance models one of the must require the so-called Conditional Full Support (CFS) to avoid simple kind of arbitrage. This means that, given the information up to any stopping time $\tau\in [0,T]$, the process is inherently free enough to go anywhere after the stopping time $\tau$ with positive probability. This motivates us to study the CFS property of the mmfBm and mmfOU processes but first we restate the precise definition of CFS from \cite{gasbarra2011conditional}.

\begin{dfn}
Let $X=(X_t)_{0\leq t\leq T}$ be a continuous stochastic process defined on a probability space $(\Omega ,\mathcal{F},\P)$, and $(\mathcal{F}_t)$ be its natural filtration. The process
 $X$ is said to have CFS if, for all $(\mathcal{F}_t)$-stopping times $\tau$, the conditional law of $(X_u)_{\tau\leq u\leq T}$ given $\mathcal{F}_\tau$, almost surely has support $C_{X_\tau}[\tau,T]$, where $C_x[\tau,T]$ is the space of continuous functions $f$ on $[\tau,T]$ satisfying $f(t) = x$. Equivalently, this means that, for all $t\in [0,T]$, $f\in C_0[\tau,T]$, and $\varepsilon>0$,
$$
\P\left(\sup_{\tau\leq u\leq T}|X_u - X_\tau - f(u)|<\varepsilon\bigg\vert\mathcal{F}_\tau\right)>0,
$$
almost surely.
\end{dfn}

\begin{thm}\label{thm 1.4}
Both the mmfBm and the mmfOU have conditional full support.
\end{thm}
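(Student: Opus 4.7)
The plan is to invoke the standard preservation principle for conditional full support: if $X=(X_t)_{t\in[0,T]}$ has CFS and $Y=(Y_t)_{t\in[0,T]}$ is a continuous process independent of $X$, then $X+Y$ has CFS as well. This follows by disintegrating the conditional law of $X+Y$ along the path of $Y$ and then translating; the statement is part of the CFS toolkit developed in \cite{gasbarra2011conditional}. With this lemma in hand, both claims of Theorem~\ref{thm 1.4} reduce to showing (i) that a single fBm and a single fOU have CFS, and (ii) that the tail obtained by omitting one summand admits a continuous modification and is independent of that summand.

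For the mmfBm, I would fix an arbitrary index $j$ and write $M = \sigma_j B^{H_j} + \widetilde M_j$ with $\widetilde M_j := \sum_{k\ne j}\sigma_k B^{H_k}$. The remainder $\widetilde M_j$ is itself an mmfBm with admissible parameters $(\sigma_k,H_k)_{k\ne j}$; it exists in $L^2(\Omega\times[0,T])$ by the existence proposition of Section~\ref{sect:definitions} and admits a H\"older continuous modification by the Kolmogorov-type estimate already used in the proof of Theorem~\ref{thm 1.3}. Independence of $\widetilde M_j$ and $B^{H_j}$ is inherited from the mutual independence of the underlying fBm family, and the single fBm $B^{H_j}$ is classically known to have CFS (see the discussion in \cite{gasbarra2011conditional}). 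The preservation lemma applied to $X=\sigma_j B^{H_j}$ and $Y=\widetilde M_j$ then yields the CFS property for $M$.

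For the mmfOU the strategy is identical once CFS has been established for a single fOU $U^{\lambda,H_j}$. For this step the integration-by-parts representation of Proposition~\ref{pro-3} gives
\begin{equation*}
U^{\lambda,H_j}_t = \e^{-\lambda t}\xi_j + B^{H_j}_t - \lambda\int_0^t \e^{-\lambda(t-s)}B^{H_j}_s\,\d s,
\end{equation*}
exhibiting $U^{\lambda,H_j}$ as the image of $B^{H_j}$ under a continuous affine bijection of $C([0,T])$: the linear part is the identity minus a compact Volterra operator with trivial kernel, hence invertible with continuous inverse. Such a homeomorphism of path space transports the CFS property from the fBm to the fOU, and adding the independent initial value $\e^{-\lambda t}\xi_j$ (which depends only on the fBm on the negative half-line) preserves it. The decomposition $U = \sigma_j U^{\lambda,H_j} + \sum_{k\ne j}\sigma_k U^{\lambda,H_k}$, combined with the preservation lemma exactly as in the mmfBm case, then completes the proof.

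The main obstacle is the fOU step: one has to verify carefully that the path map $B^{H_j}\mapsto U^{\lambda,H_j}$ is a homeomorphism between the appropriate subsets of $C([0,T])$, so that conditional full support transfers, and that the $\xi_j$-term, built from the independent fBm on the negative half-line, can be absorbed without spoiling the support. Once these measure-theoretic points are settled, the remainder of the proof is a mechanical repetition of the mmfBm argument.
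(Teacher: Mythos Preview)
Your approach is correct but follows a different route from the paper. The paper never decomposes into single components or invokes a preservation-under-independent-sums lemma. Instead it applies the spectral criterion of Theorem~2.1 in \cite{gasbarra2011conditional} directly to the spectral densities \eqref{mmfbm-sd} and \eqref{mmfou-sd} already computed in Section~\ref{sect:definitions}: it bounds $f(x)$ (respectively $f_\lambda(x)$) from below by an explicit positive function and checks that $\int_{x_0}^\infty x^{-2}\log f(x)\,\d x>-\infty$, which is the entire argument. Your argument is more structural: CFS for a single fBm plus the independent-sum lemma gives the mmfBm case, and CFS for a single fOU (via the Volterra homeomorphism) plus the same lemma gives the mmfOU case. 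The paper's approach is shorter here because the spectral densities are already at hand; your approach is more portable, but note that the preservation lemma you cite is not actually stated in \cite{gasbarra2011conditional} (it is usually attributed to Pakkanen or to Guasoni--R\'asonyi--Schachermayer), and the fOU homeomorphism step, while correct, requires checking that the Volterra map is adapted in both directions so that the natural filtrations of $B^{H_j}$ and $U^{\lambda,H_j}$ coincide. Incidentally, once you have CFS for the mmfBm you could bypass the single-fOU analysis entirely: the mmfOU itself solves the Langevin equation \eqref{Langevin} driven by $M$, so the same path-space homeomorphism plus the independent initial datum $\xi$ transfers CFS from $M$ to $U$ in one step.
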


\begin{proof}
It is easy to check that
$$
f(x)=\sum_{k=1}^\infty\sigma_k^2f_{H_k}(x)\geq
\left\lbrace
\begin{array}{lll}
\frac{\varepsilon_H\Gamma(1)}{2\pi}\biggl(\displaystyle\sum_{k=1}^\infty\sigma_k^2\biggl)|x|^{1-2H_{\inf}}&:&|x|\leq 1\\
\ \\
\frac{\varepsilon_H\Gamma(1)}{2\pi}\biggl(\displaystyle\sum_{k=1}^\infty\sigma_k^2\biggl)|x|^{1-2H_{\sup}}&:&|x|\geq 1
\end{array}\right.
=:h(x),
$$
where
\begin{equation*}
\varepsilon_H:=\inf \Big\lbrace \sin(\pi H_k)\Big\rbrace_{k\geq 1}=\inf \Big\lbrace \sin(\pi H_{\inf}), \sin(\pi H_{\sup})\Big\rbrace
\end{equation*}
Since $0<H_{\inf}\leq H_{\sup}<1$, $\varepsilon_H>0$. Thus $h(x)>0$ for $x\neq 0$. Thus, for any $x_0>1$ we have
\begin{eqnarray*}\label{eq 19}
\int_{x_0}^\infty\frac{\log f(x)}{x^2} \,\d x
&\ge&
\int_{x_0}^\infty\frac{\log h(x)}{x^2}\, \d x \\
&=&
\log\left\lbrace\frac{\varepsilon_H}{2\pi}\biggl(\sum_{k=1}^\infty\sigma_k^2\biggl)\right\rbrace\int_{x_0}^\infty\frac{\d x}{x^2} + (1-2H_{\sup})\int_{x_0}^\infty\frac{\log x}{x^2}\, \d x  \\
&>& -\infty,
\end{eqnarray*}
and by Theorem 2.1 of \cite{gasbarra2011conditional} this proves that $M$ has conditional full support.

For mmfOU it is easy to check that
\begin{equation*}\label{eq 45}
f_\lambda(x)=\sum_{k=1}^\infty\sigma_k^2 f_{\lambda,H_k}(x)\geq
\left\lbrace
\begin{array}{lll}
\frac{\varepsilon_H\Gamma(1)}{2\pi}\biggl(\displaystyle\sum_{k=1}^\infty\sigma_k^2\biggl)\frac{|x|^{1-2H_{\inf}}}{\lambda^2+x^2}&:&|x|\leq 1\\
\ \\
\frac{\varepsilon_H\Gamma(1)}{2\pi}\biggl(\displaystyle\sum_{k=1}^\infty\sigma_k^2\biggl)\frac{|x|^{1-2H_{\sup}}}{\lambda^2+x^2}&:&|x|\geq 1
\end{array}\right.
=:h(x),
\end{equation*}
where
\begin{equation*}
\varepsilon_H:=\inf \Big\lbrace \sin(\pi H_k)\Big\rbrace_{k\geq 1}=\inf \Big\lbrace \sin(\pi H_{\inf}), \sin(\pi H_{\sup})\Big\rbrace
\end{equation*}
Since $0<H_{\inf}\leq H_{\sup}<1$, we have $\varepsilon_H>0$. Consequently, $h(x)>0$ for $x\neq 0$. Therefore, for any $x_0>1$ we have that
\begin{eqnarray*}
\lefteqn{\int_{x_0}^\infty\frac{\log f_\lambda(x)}{x^2}\, \d x} \\
&\ge&
\int_{x_0}^\infty\frac{\log h(x)}{x^2}\, \d x\notag\\
&=& \log\left\lbrace\frac{\varepsilon_H}{2\pi}\biggl(\sum_{k=1}^\infty\sigma_k^2\biggl)\right\rbrace\int_{x_0}^\infty\frac{\d x}{x^2}
+ (1-2H_{\sup})\int_{x_0}^\infty\frac{\log x}{x^2}\, \d x\notag
- \int_{x_0}^\infty\frac{\log (\lambda^2+x^2)}{x^2}\, \d x \\
&>& -\infty.\label{eq 46}
\end{eqnarray*}
The claim follows now from Theorem 2.1 of \cite{gasbarra2011conditional}.
\end{proof}

\section{Sample Paths}\label{sect:SP}
We illustrate the mmfBm and the mmfOU by simulating their sample paths.

Each of the sample paths, both for the mmfBm and for the mmfOU is given on $N=1000$ equidistant points $t_k=k/(N-1)$ of the time-interval $[0,1]$, with $n=10$ equidistant Hurst exponents $H_i=H_{\inf}+(i-1)(H_{\sup} -H_{\inf} )/(n-1)$ on the Hurst interval $[H_{\inf},H_{\sup}]$. Also, the volatility coefficients $\sigma_i=i^{-1},i!^{-1},\e^{-i}$ are used in the sample paths. In all mmfOU paths $\lambda=1$.

\begin{figure}[H]
\centering
\includegraphics[width=0.98\textwidth
]{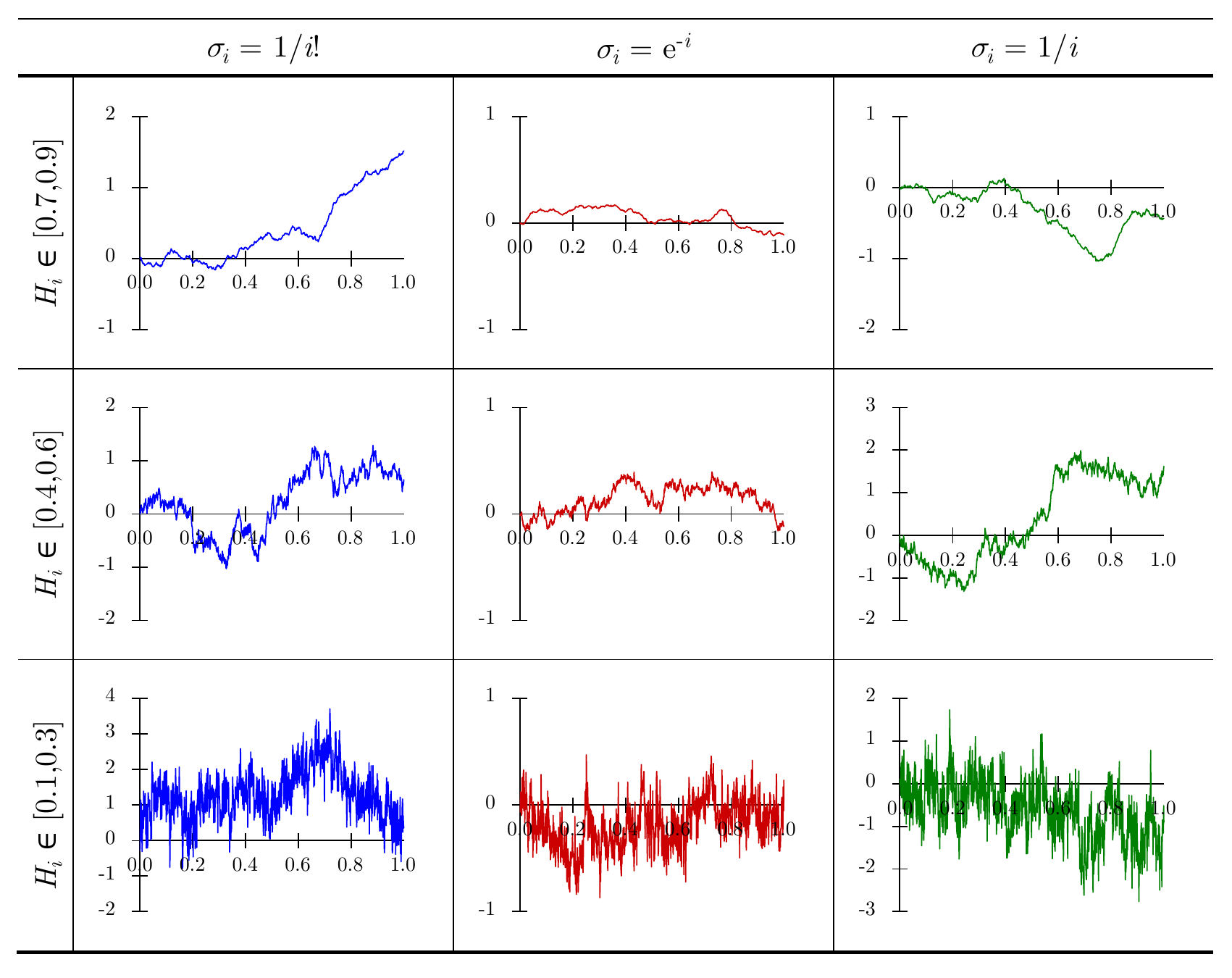}
\caption{Sample paths of mmfBm with equidistant time points and equidistant Hurst parameters.}\label{Fig: mmfBm}
\end{figure}

\begin{figure}[H]
\centering
\includegraphics[width=0.98\textwidth
]{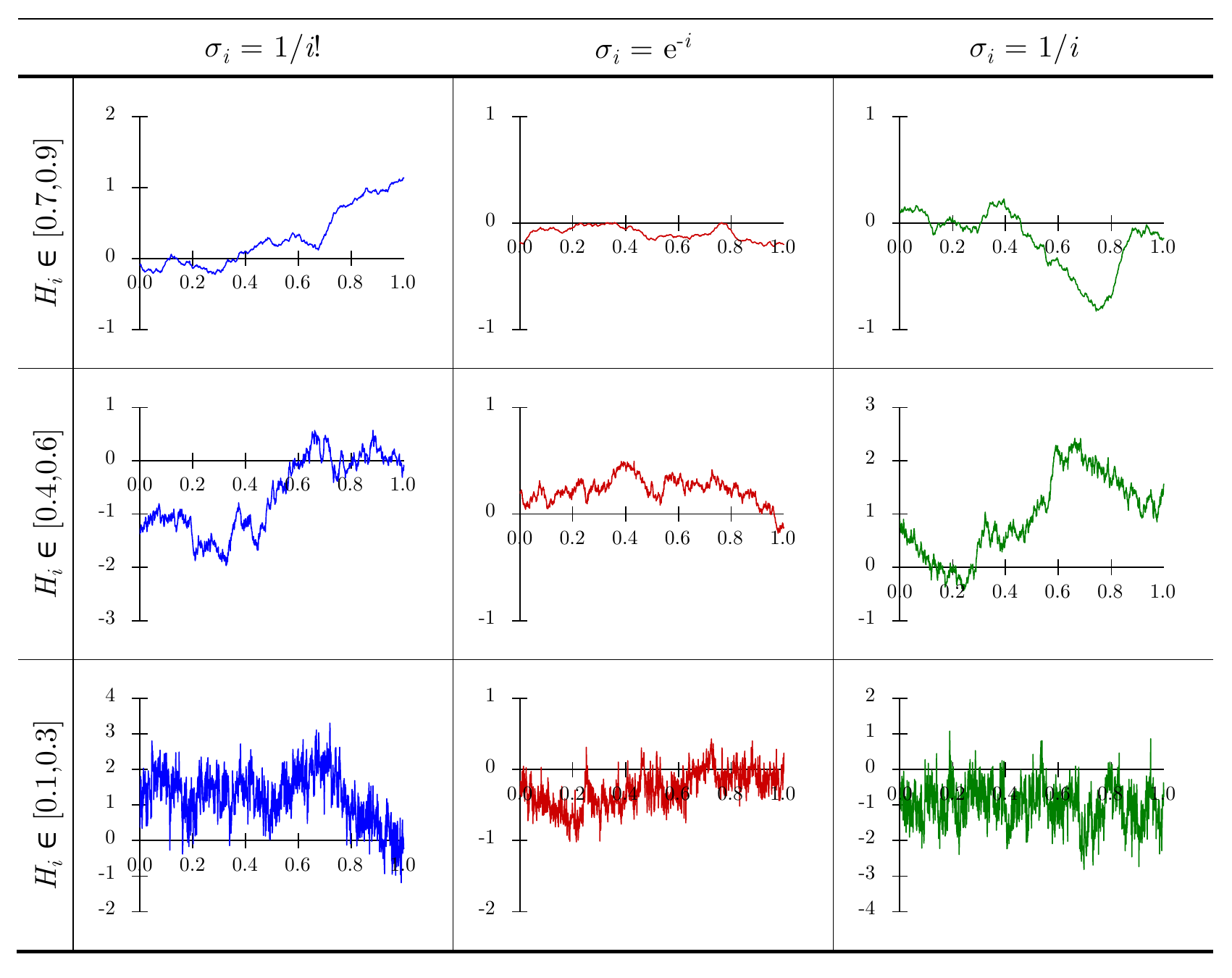}
\caption{Sample paths of mmfOU with equidistant time points and equidistant Hurst parameters.}\label{Fig: mmfOU}
\end{figure}


\bibliographystyle{siam}
\bibliography{pipliateekki.bib}

\end{document}